\documentclass[a4paper]{amsart}
\usepackage{amssymb, amsthm, amsmath, pifont} 
\usepackage[usenames]{color}
\usepackage{amsfonts}
\usepackage{enumerate}
\usepackage{pdfpages}
\usepackage[all]{xypic}
\usepackage{graphicx}
\usepackage{psfrag}
\usepackage{verbatim}
\usepackage{pstricks}
\usepackage{mathrsfs}
\usepackage[all,knot]{xy}
\usepackage{todonotes}
\usepackage{tikz}
\usetikzlibrary{decorations}
\usetikzlibrary{decorations.pathmorphing}
\newtheorem{theorem}{Theorem}

\newtheorem{corollary}[theorem]{Corollary}

\newtheorem{prop}[theorem]{Proposition}
\newtheorem{proposition}[theorem]{Proposition}

\newtheorem{lemma}[theorem]{Lemma}
\theoremstyle{definition}
\newtheorem{question}[theorem]{Question}
\newtheorem{example}[theorem]{Example}
\newtheorem{definition}[theorem]{Definition}
\newtheorem*{intdef}{Definition}
\theoremstyle{remark}
\newtheorem{remark}[theorem]{Remark}
\newtheorem{claim}[theorem]{Claim}
\newtheorem{obs}[theorem]{Observation}

\newtheorem*{acknowledgments}{Acknowledgments}

\def\et{\;\mbox{and}\;}

\def\for{\quad\mbox{for }}

\def\et{\quad\mbox{and}\quad}

\def\wt{\widetilde}

\def\epsilon{\varepsilon}

\def\R{\mathbb{R}}
\def\Z{\mathbb{Z}}
\def\C{\mathbb{C}}

\title{Up to topological concordance links are strongly quasipositive}

\author{Maciej Borodzik}
\address{Institute of Mathematics, University of Warsaw, ul. Banacha 2,
02-097 Warsaw, Poland}
\email{mcboro@mimuw.edu.pl}
\author{Peter Feller}
\address{Department of Mathematics, ETH Z\"urich, R\"amistrasse 101, 8092 Z\"urich, Switzerland}
\email{peter.feller@math.ch}
\keywords{knots, links, quasipositive, strongly quasipositive, concordance}
\subjclass[2010]{57M25, 57M27}
\begin{document}
\begin{abstract}
We generalize an algorithm of Rudolph to establish that every link is topologically concordant to a strongly quasipositive link.
\end{abstract}
\maketitle
\section{Introduction}
\subsection{Statement of results}
A braid or $n$-braid in Artin's braid group on $n$ strands, i.e. \[B_n=\left\langle a_1,\ldots,a_{n-1}\;\middle|\;a_ia_j=a_ja_i\text{ for }|i-j|\geq 2,a_ia_{i+1}a_i=a_{i+1}a_ia_{i+1}\right\rangle\quad\text{\cite{Artin_TheorieDerZoepfe},}\]
is called \emph{quasipositive} if it is a product of conjugates of positive Artin generators $a_i$ of $B_n$.
A~braid is called \emph{strongly quasipositive} if it is a product of braids $a_{i,j}$, where
\begin{equation}\label{eq:positive_braid_word}\left\{a_{i,j}=(a_ia_{i+1}\cdots a_{j-1})a_j(a_ia_{i+1}\cdots a_{j-1})^{-1}|\for 1\leq i\leq j\leq n\right\}.\end{equation}
A \emph{link} $L$---an oriented smooth 1-dimensional submanifold of $S^3$---is called \emph{quasipositive} (respectively: \emph{strongly quasipositive}) if it arises as the closure of a quasipositive (respectively: strongly quasipositive)
braid. Clearly, any strongly quasipositive link is also quasipositive. We refer to~\cite{Rudolph_handbook} for a resource on quasipositive links.

It was proved by Boileau and Orevkov that a link is quasipositive if and only if it can be realized as the transverse intersection of a complex curve in $\C^2$ with the unit 3-sphere $S^3\subset\C^2$;
see~\cite{BoileauOrevkov_QuasiPositivite}. Given this result, quasipositive links form a bridge between knot theory and the theory of complex curves in $\C^2$. This in particular
suggests that
quasipositive knots should have special properties in the concordance group.
In this light, the main result of the paper seems counterintuitive.
%
\begin{theorem}\label{thm:main1}
Every link $L$ is topologically concordant to a strongly quasipositive link $L'$.
\end{theorem}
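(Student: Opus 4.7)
The plan is to generalize the algorithm of Rudolph alluded to in the abstract, which takes a braid word for $L$ and converts it into a quasipositive braid word via a sequence of local moves, each realizing an elementary band-attachment cobordism. The goal of the generalization is twofold: the output should be strongly quasipositive (not merely quasipositive), and the resulting cobordism, although smoothly of positive genus, should become a topological concordance after a single application of Freedman's disk embedding theorem.

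First, I would represent $L=\hat\beta$ for some $\beta\in B_n$ and rewrite $\beta$ in the band alphabet $\{a_{i,j}^{\pm1}\}$ from~\eqref{eq:positive_braid_word}. The naive strongly quasipositive candidate is the word $\beta^{+}$ obtained from $\beta$ by replacing each $a_{i,j}^{-1}$ by $a_{i,j}$; the link $L^+:=\widehat{\beta^{+}}$ is then strongly quasipositive by construction, reducing the problem to producing a topological concordance between $L$ and $L^{+}$ (perhaps after further Markov stabilization to provide room for the local manipulations below).

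Second, I would construct an explicit smooth cobordism $\Sigma\subset S^3\times[0,1]$ from $L$ to $L^{+}$ by removing the negative band letters of $\beta$ one at a time. Each negative letter $a_{i,j}^{-1}$ is eliminated, after a Markov stabilization that supplies free strands, by means of a local braid identity expressing $a_{i,j}^{-1}$ as a strongly quasipositive word times a clasp supported on the new strands. The resulting $\Sigma$ is a smooth cobordism whose genus equals the number of negative band letters processed, with the extra genus concentrated in standard local clasp pieces, one per replacement.

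The hard part will be the third step: promoting $\Sigma$ to a topological concordance by replacing each clasp by a locally flat disk. I would arrange the local models so that, for each clasp, the complement of the corresponding genus-$1$ piece in a small $4$-ball has infinite cyclic (hence ``good'' in the sense of Freedman) fundamental group. Freedman's disk embedding theorem then supplies a locally flat topological disk, and iterating this surgery at every clasp converts $\Sigma$ into the desired topologically locally flat annulus, proving Theorem~\ref{thm:main1}. Verifying the fundamental-group hypothesis at each clasp is the main technical obstacle; it amounts to showing that the local model chosen for the algorithm is topologically standard (for instance, a Whitehead-double-type clasp whose meridional complement is known to have cyclic fundamental group).
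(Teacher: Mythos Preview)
Your high-level architecture is right---locally modify a braid word to make it strongly quasipositive, then invoke Freedman---but the mechanism you describe has genuine gaps. The naive candidate $L^{+}$ obtained by flipping each $a_{i,j}^{-1}$ to $a_{i,j}$ is a non-starter: this changes the Seifert form, hence linking numbers and signatures, so $L^{+}$ is typically not even algebraically concordant to $L$ (the negative Hopf link becomes the positive one). Your refined plan, a ``local braid identity expressing $a_{i,j}^{-1}$ as a strongly quasipositive word times a clasp on stabilized strands,'' is never written down, and no such identity is evident. Even granting one, your Freedman step is not correctly formulated: Freedman's theorem produces locally flat slice disks for knots with $\Delta=1$, not fillings of abstract ``clasps'' inside a cobordism. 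You would need to exhibit, for each genus-$1$ piece, a specific knot in some level $S^{3}\times\{t\}$ with trivial Alexander polynomial whose slice disk caps the hole and keeps the surgered surface embedded---and that is exactly the content you defer as ``the main technical obstacle.''

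The paper avoids both problems by a different construction. Rudolph's original algorithm does not produce a cobordism at all; it reembeds the canonical Seifert surface by tying a zero-framed positive trefoil into each negative band, preserving the Seifert form. The paper's contribution (Proposition~\ref{prop:main}) is that \emph{any} non-trivial strongly quasipositive knot $K$ can be used in place of the trefoil and the resulting surface is still quasipositive. One then chooses such a $K$ with $\Delta_{K}=1$, for instance the untwisted Whitehead double of the trefoil. The new link $L'=\partial F'$ is an iterated satellite of $L$ with companion $K$ (Proposition~\ref{prop:is_satellite}); since $K$ is topologically slice by Freedman, the satellite operation directly yields a topological concordance (Proposition~\ref{prop:concordance_satellite}). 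No positive-genus cobordism is ever built or surgered, and the only appeal to Freedman is the off-the-shelf statement that $\Delta_{K}=1$ implies $K$ bounds a locally flat disk with $\pi_{1}$ of the complement equal to $\mathbb{Z}$.
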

In fact, we establish 
the following more precise, but also more technical result.
\begin{theorem}\label{thm:main2}
Let $L\subset S^3$ be a link with $m$ components. There is a strongly quasipositive $m$-component link $L'$ and a topologically locally flat
surface $A\subset S^3\times[0,1]$ which is a disjoint
union of $m$ annuli such that
\begin{itemize}
  \item $\partial A=L\times\{0\}\sqcup L'\times\{1\}\subset S^3\times[0,1]$;
  \item the inclusion induced map $\pi_1(S^3\setminus L)\to \pi_1(S^3\times[0,1]\setminus A)$ is an isomorphism;
  \item the inclusion induced map $\pi_1(S^3\setminus L')\to \pi_1(S^3\times[0,1]\setminus A)$ is a surjection.
\end{itemize}
\end{theorem}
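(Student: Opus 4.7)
The plan is to proceed constructively by iteratively modifying a braid presentation of $L$. By Alexander's theorem, $L$ is the closure of some braid $\beta\in B_n$ whose braid permutation has exactly $m$ cycles; fix an expression $\beta=a_{i_1}^{\varepsilon_1}\cdots a_{i_k}^{\varepsilon_k}$ as a word in the Artin generators and their inverses. The goal is to transform $\beta$ into a strongly quasipositive braid $\beta'$ by a sequence of local topological moves, each realising an annular concordance with the required $\pi_1$ behaviour, so that $L'$ is the closure of $\beta'$.

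The core of the argument is a \emph{local replacement step}: each negative letter $a_i^{-1}$ in the braid word should be replaceable by a word in the strongly quasipositive generators $a_{j,k}$ at the cost of an annular topological concordance supported in a 4-ball. To build such a move, take a small 4-ball $B\subset S^3\times[0,1]$ containing a neighbourhood of the crossing in $S^3\times\{0\}$ and a neighbourhood of the replacement tangle in $S^3\times\{1\}$, and construct inside $B$ a pair-of-pants cobordism between them; its third boundary component is an auxiliary knot $K$. One arranges the replacement word so that $K$ has trivial Alexander polynomial, so that by Freedman's topological disk theorem $K$ bounds a locally flat disk in a 4-ball disjoint from the rest of the braid. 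Capping the pair of pants with this disk turns it into an annulus. The local nature of the construction, together with the fact that the complement of the tangle in $B$ deformation retracts onto its $S^3\times\{0\}$-slice, yields the $\pi_1$-isomorphism from the $L$-side and the surjection from the $L'$-side \emph{inside} $B$.

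Iterating the replacement over every negative letter of $\beta$ produces a strongly quasipositive braid $\beta'$, an $m$-component link $L'$, and a disjoint union $A$ of $m$ topologically locally flat annuli from $L$ to $L'$ in $S^3\times[0,1]$. The isomorphism on $\pi_1$ from the $L$-side persists under iteration because each local move is supported in a 4-ball whose effect on $\pi_1$ of the ambient exterior is trivial; the surjection on $\pi_1$ from the $L'$-side is a general feature of concordances, as meridians of $L'$ generate $\pi_1$ of the exterior of $A$ by an Alexander-duality argument. The principal obstacle I anticipate is designing the explicit local replacement: simultaneously achieving a \textbf{strongly} quasipositive (rather than merely quasipositive) output and forcing the auxiliary knot $K$ to have trivial Alexander polynomial is a nontrivial algebraic constraint on the local model, and verifying its compatibility with the global iteration—that the local $\pi_1$-contributions do not accidentally kill meridians of $L$—requires careful bookkeeping with base-point choices and Wirtinger-type presentations.
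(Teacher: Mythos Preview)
Your overall strategy---replace each negative generator by a strongly quasipositive piece, and use Freedman's theorem on an auxiliary Alexander-polynomial-one knot to produce the locally flat concordance---is the same as the paper's. But two points in your outline are genuine gaps rather than routine bookkeeping.

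First, the ``local replacement'' is exactly where the content lies, and your proposal does not indicate how to solve it. The paper's resolution is to decouple the two constraints you worry about: one proves (Proposition~\ref{prop:main}) that for \emph{any} non-trivial strongly quasipositive knot $K$, tying a zero-framed copy of $K$ into each negative band of the canonical Seifert surface yields a quasipositive surface. The key input is Rudolph's characterization theorem that full subsurfaces of quasipositive surfaces are quasipositive, applied to show that the zero-framed annulus of a non-trivial strongly quasipositive knot is itself a quasipositive surface; an explicit braid-word manipulation then splices this annulus in place of the negative band. Only \emph{after} establishing this for arbitrary $K$ does one choose $K$ to be, say, the Whitehead double of the trefoil, which is strongly quasipositive with trivial Alexander polynomial. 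Your pair-of-pants picture is compatible with this (tying $K$ into a band is a satellite with companion $K$), but the substance is in the Rudolph-style argument, not in the capping.

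Second, your $\pi_1$ claims are not correct as stated. The assertion that ``the surjection on $\pi_1$ from the $L'$-side is a general feature of concordances'' is false: Alexander duality controls $H_1$, not $\pi_1$, and there is no reason meridians of one end of a locally flat concordance generate $\pi_1$ of its exterior. Likewise, the complement of your capped annulus in the local $4$-ball does not literally deformation retract to the level-$0$ slice once a Freedman disk is glued in. What saves both statements is the specific fact that Freedman's disk has exterior with $\pi_1\cong\Z$; one then runs a Seifert--van~Kampen argument (Proposition~\ref{prop:concordance_satellite} in the paper) for the satellite construction, which propagates the isomorphism from the $L$-side and the surjection from the $L'$-side through each step. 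You should replace the Alexander-duality hand-wave with this computation.
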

In the above, the surface $A$ is oriented and the induced orientations on $L\times\{0\}$ and $L'\times\{1\}$ are the orientation of $L\times\{0\}$ and the reversed orientation of $L'\times\{1\}$, respectively.
\subsection{Context}\label{sec:overview}

In~\cite{Rudolph_83_ConstructionsOfQP1} Rudolph proved the following result (see also \cite[Proposition~87]{Rudolph_handbook}):
\begin{theorem}\label{thm:rudolph}
  If $F$ is a Seifert surface of an $m$-component link in $S^3$, then there exists a strongly quasipositive $m$-component link $L$ which arises as the boundary of a reembedding of $F$ in $S^3$ that has the same Seifert form as $F$.\qed
\end{theorem}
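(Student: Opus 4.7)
The plan is to realize a reembedding of $F$ as the canonical Bennequin surface of the closure of a strongly quasipositive braid. Begin by fixing a disk--band decomposition $F=D\cup B_1\cup\cdots\cup B_N$, where $D=D_1\sqcup\cdots\sqcup D_k$ is a disjoint union of disks and each $B_\ell$ is a band attached to $\partial D$ along two disjoint arcs; such a decomposition exists for any compact oriented surface with boundary. Writing $p(\ell)\leq q(\ell)$ for the indices of the two disks to which $B_\ell$ is attached, the cores of the bands (closed up through the disks) provide a basis of $H_1(F)$ in terms of which the Seifert form is encoded by the framings and pairwise linking numbers of these cores in any embedding of $F$ into $S^3$.

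Next I would execute the braided reembedding. Place the $k$ disks as parallel horizontal round disks at distinct heights, so that they serve as the canonical Seifert disks of the trivial $k$-strand braid whose axis is a vertical line (compactified to an unknot in $S^3$). Then, for each band $B_\ell$ in turn, reembed it as a \emph{Bennequin band} lying above the previously placed pieces and realising the strongly quasipositive generator $a_{p(\ell),q(\ell)}$, that is, a narrow twisted rectangle implementing a single positive crossing between strands $p(\ell)$ and $q(\ell)$ that passes over the strands in between. By construction, the boundary of the resulting surface is the closure of the strongly quasipositive braid word $a_{p(1),q(1)}\cdots a_{p(N),q(N)}$, and the reembedded surface is its canonical Bennequin Seifert surface. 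This gives candidate data $(L,F)$ of the right qualitative type.

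The main obstacle is matching the Seifert form. The Bennequin reembedding above has a specific Seifert form that will generically differ from that of the original embedding of $F$. Two sources of flexibility must be exploited. First, the disk--band decomposition of $F$ is not unique: handle slides of bands across each other change the basis of $H_1(F)$ and correspondingly alter the matrix representing the Seifert form, without modifying the underlying abstract surface. Second, the braided reembedding itself admits variation: the bands may be attached along different arcs in $\partial D$, they may be stacked in different orders, and the routing of each band around the braid axis can be altered while keeping it in Bennequin form. The technical heart of the proof is a combinatorial lemma which guarantees that by a judicious combination of these operations the original Seifert form on $H_1(F)$ can always be realised; I expect this matching step to require the most care, as naive choices of braided reembedding almost always yield the wrong self- and pairwise linking numbers of the band cores.
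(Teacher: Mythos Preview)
Your proposal has a genuine gap at exactly the point you flag as ``the technical heart''. Replacing each band $B_\ell$ by a \emph{single} positive Bennequin band $a_{p(\ell),q(\ell)}$ forces the framing of that band core to be $+1$ (in the convention where the positive generator contributes a positive half-twist). Handle slides change the basis of $H_1(F)$ by a congruence, and reshuffling the stacking order or attachment arcs only perturbs off-diagonal linking numbers; neither operation can manufacture a diagonal entry of the wrong sign in a one-to-one band replacement. So the ``combinatorial lemma'' you hope for is not a routine matching statement---it is the whole theorem, and in the form you state it, it is false: a Bennequin surface built from $N$ positive band generators on $k$ disks simply cannot have the Seifert form of, say, the standard genus-one surface for the left-handed trefoil.

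Rudolph's actual argument (and the paper's generalization in Proposition~\ref{prop:main}) avoids this obstacle by a different mechanism. One first realizes $F$ as the canonical surface of a braid word $w=\prod a_{i_k,j_k}^{\epsilon_k}$ with \emph{both} signs allowed; this already has the correct Seifert form. Then each negative band is replaced not by a single positive generator but by an entire strongly quasipositive sub-braid on many extra strands, whose canonical surface is a band with a zero-framed non-trivial strongly quasipositive knot (Rudolph used the trefoil) tied into it. The two key facts are: (i) the zero-framed annulus of any non-trivial strongly quasipositive knot is itself a quasipositive surface, so cutting it open yields a ``quasipositive band with a negative twist''; and (ii) tying a zero-framed knot into a band is a local modification that leaves every self- and pairwise linking number, hence the entire Seifert form, unchanged. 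Thus the Seifert form is preserved \emph{by construction} rather than matched after the fact. Your proposal is missing precisely this knotted-band trick; without it the Seifert-form matching step does not go through.
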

Theorem~\ref{thm:main2} can be regarded as a generalization of this result; compare also with Remark~\ref{rem:SM}. For example, one of the corollaries of Theorem~\ref{thm:rudolph} is that
every algebraic concordance class of links contains a strongly quasipositive link. Our result is that every topological concordance class of links contains a strongly quasipositive link.

It follows from Theorem~\ref{thm:rudolph} that no link invariant obtained from the S-equivalence class of a Seifert matrix can tell
a strongly quasipositive link from a non strongly quasipositive one; see~\cite[Proposition 87]{Rudolph_handbook}. Our result says that if a link invariant
is equal on topologically concordant links, then this link invariant cannot distinguish strongly quasipositive links from non strongly quasipositive links.
For example we have the following result.
\begin{corollary}\label{cor:cor}
The linking numbers and the Milnor invariants do not distinguish strongly quasipositive links from non quasipositive links: if there is a link $L$ with a given set of Milnor invariants, then the same set can be realized as a set of Milnor invariants
of a strongly quasipositive link $L'$. \qed 
\end{corollary}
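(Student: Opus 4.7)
The plan is to apply Theorem~\ref{thm:main2} to produce the target link $L'$ and then combine the fundamental-group conditions on the annular concordance $A$ with the fact that Milnor's $\bar\mu$-invariants (and a fortiori linking numbers) are invariants of the nilpotent tower of the link group together with its peripheral data.

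First, I would apply Theorem~\ref{thm:main2} to obtain a strongly quasipositive $m$-component link $L'$ together with a locally flat annular concordance $A\subset S^3\times[0,1]$, such that the inclusion $i_L\colon\pi_1(S^3\setminus L)\to\pi_1((S^3\times[0,1])\setminus A)$ is an isomorphism and the inclusion $i_{L'}\colon\pi_1(S^3\setminus L')\to\pi_1((S^3\times[0,1])\setminus A)$ is a surjection. Abelianising and tracking meridians shows that both maps induce $H_1$-isomorphisms sending meridians to meridians, so the linking-number matrix of $L$ coincides component by component with that of $L'$.

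Next, for the Milnor invariants, I would invoke Stallings' theorem: a homomorphism of groups inducing an isomorphism on $H_1$ and a surjection on $H_2$ induces isomorphisms on all lower-central-series quotients $G/\gamma_n G$. Both inclusions $i_L$ and $i_{L'}$ satisfy Stallings' hypothesis; for $i_L$ this is automatic from the stronger $\pi_1$-isomorphism, and for $i_{L'}$ the given $\pi_1$-surjection together with a Mayer--Vietoris computation on the complement of $A$ yields the required $H_1$-isomorphism and $H_2$-surjection. Composing the resulting nilpotent-quotient isomorphisms gives, for every $n$, an isomorphism $\pi_1(S^3\setminus L)/\gamma_n\cong \pi_1(S^3\setminus L')/\gamma_n$ under which meridians of $L$ correspond to meridians of $L'$ and, because $A$ is a concordance, longitudes correspond to longitudes with the orientations specified immediately after Theorem~\ref{thm:main2}. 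Since Milnor's $\bar\mu$-invariants are precisely the Magnus-expansion coefficients of the longitudes in such nilpotent quotients with respect to the meridian generators, $L$ and $L'$ carry the same set of $\bar\mu$-invariants.

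The main obstacle I expect is purely bookkeeping: verifying the $H_2$-surjection clause of Stallings' hypothesis and checking that meridians and longitudes are indeed matched up correctly (component by component, with compatible orientations) along the chain of identifications. Both points follow from the elementary topology of the annular concordance $A$ and the orientation convention recorded immediately after Theorem~\ref{thm:main2}, which is presumably why the authors mark the corollary with \qed in the statement itself.
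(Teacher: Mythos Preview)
Your argument is correct, but it takes a more hands-on route than the paper intends. The paper marks the corollary with \qed\ because it is meant to follow immediately from Theorem~\ref{thm:main1} together with the classical fact (due to Casson, via Stallings' theorem) that Milnor's $\bar\mu$-invariants are invariants of topological concordance: given that, one simply replaces $L$ by the concordant strongly quasipositive $L'$ and reads off equality of all $\bar\mu$-invariants, with linking numbers being the length-two case.

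What you do instead is essentially reprove the concordance invariance of $\bar\mu$-invariants for the particular concordance $A$ produced by Theorem~\ref{thm:main2}, exploiting the extra $\pi_1$-information recorded there. This is perfectly valid, and the $\pi_1$-isomorphism for $i_L$ does let you bypass the Stallings hypothesis on that side for free. Note, however, that on the $i_{L'}$ side the $\pi_1$-surjection alone does not give you the $H_2$-surjection (a free group surjects onto any group, yet has trivial $H_2$); you really do need the Mayer--Vietoris/Alexander-duality computation you mention, and once you have that computation the $\pi_1$-surjection is no longer doing essential work---the standard Casson argument would go through with just Theorem~\ref{thm:main1}. So your approach is self-contained but slightly heavier than necessary; the paper's approach trades that detail for a citation of a well-known result.
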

\begin{remark}

	The fact that there exist strongly quasipositive links with arbitrary linking numbers follows already from Rudolph's result (Theorem~\ref{thm:rudolph}).
The corollary is a generalization of that fact, because linking numbers are second-order Milnor invariants.
\end{remark}
In \cite{Harvey} Harvey defined a family of concordance invariants $\rho_k$, $k\in\Z_{\ge 0}$, which are an instance of $L^2$-invariants; see \cite[Section 5]{Cochran_Orr_Teichner}.
As these invariants are invariants of topological concordance (see \cite[Corollary 4.3]{Harvey}), we obtain the following result.
\begin{corollary}
  The Harvey's $\rho_k$ invariants do not detect strongly quasipositive knots.\qed
\end{corollary}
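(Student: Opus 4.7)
The plan is to deduce this corollary directly from Theorem~\ref{thm:main1} together with the fact, recalled just before the statement, that each $\rho_k$ is a topological concordance invariant by \cite[Corollary 4.3]{Harvey}. Fix an arbitrary knot $K$ and view it as a $1$-component link. Theorem~\ref{thm:main1} (more precisely Theorem~\ref{thm:main2} with $m=1$) then supplies a strongly quasipositive link $L'$ with the same number of components as $K$, i.e.\ a strongly quasipositive knot $K'$, which is topologically concordant to $K$ through a locally flat annulus in $S^3\times[0,1]$. Topological concordance invariance of $\rho_k$ immediately gives $\rho_k(K)=\rho_k(K')$.

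The conclusion is then just a reformulation of this equality. For every value $v=\rho_k(K)$ attained by some knot $K$, there is a strongly quasipositive knot $K'$ with $\rho_k(K')=v$. Hence $\rho_k$ cannot serve as an obstruction to strong quasipositivity: no value of $\rho_k$ is forbidden for a strongly quasipositive knot. Since the entire argument reduces to invoking Theorem~\ref{thm:main1} and the concordance invariance property cited from Harvey, there is no genuine obstacle; the corollary is simply the specialization of Theorem~\ref{thm:main1} to the family of invariants $\{\rho_k\}_{k\ge 0}$, and the same one-line reasoning would apply to any other topological concordance invariant of knots.
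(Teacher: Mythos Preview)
Your argument is correct and matches the paper's approach exactly: the corollary is stated with a \qed\ and no separate proof, since the preceding sentence already notes that the $\rho_k$ are topological concordance invariants, so the result is immediate from Theorem~\ref{thm:main1}.
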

Gilmer \cite[Section 2]{Gilmer} interprets Casson-Gordon invariants as a homomorphism from the knot concordance group to a suitably defined Witt group.
In this context we can say that Casson-Gordon invariants do not detect strongly quasipositive knots. Another way of phrasing the failure of
Casson-Gordon invariants to detect strongly quasipositive knots is the following.
\begin{proposition}\label{prop:CG}
  Let $K\subset S^3$ be a knot. Then there exists a strongly quasipositive knot $K'$ such that for any $k>1$ the spaces $\operatorname{Hom}(H_1(\Sigma_k(K);\Z);\C^*)$
  and $\operatorname{Hom}(H_1(\Sigma_k(K');\Z);\C^*)$ are identified and under this identification, for every $\chi\in\operatorname{Hom}(H_1(\Sigma_k(K);\Z);\C^*)$
  that is of prime power order
  we have the equality of Casson--Gordon invariants $\tau(K;\chi)=\tau(K';\chi)$.
\end{proposition}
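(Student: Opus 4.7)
The plan is to apply Theorem~\ref{thm:main2} to $K$ and to analyze the resulting concordance at the level of cyclic branched covers. Applying Theorem~\ref{thm:main2} to $L=K$ yields a strongly quasipositive knot $K'$ together with a topologically locally flat annulus $A\subset S^3\times[0,1]$ with $\partial A=K\times\{0\}\sqcup K'\times\{1\}$ satisfying the stated fundamental-group conditions. Writing $X:=S^3\times[0,1]\setminus\nu(A)$ for the exterior, the $\pi_1$-isomorphism $\pi_1(S^3\setminus K)\xrightarrow{\cong}\pi_1(X)$ forces $H_1(X;\Z)\cong\Z$, so the $k$-fold cyclic cover $\tilde X_k\to X$ and, by filling in the branch locus $A$ equivariantly, the associated $k$-fold cyclic branched cover $W_k$ of $S^3\times[0,1]$ branched over $A$ are well-defined. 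By construction, $W_k$ is a 4-dimensional cobordism from $\Sigma_k(K)$ to $\Sigma_k(K')$.

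The fundamental-group conditions pass to the cyclic covers: the inclusions of the $k$-fold cyclic covers of $S^3\setminus K$ and $S^3\setminus K'$ into $\tilde X_k$ induce a $\pi_1$-isomorphism and a $\pi_1$-surjection, respectively. Filling in meridional disks---the same operation on both sides, since meridians correspond under the $\pi_1$-isomorphism---produces an isomorphism $H_1(\Sigma_k(K);\Z)\xrightarrow{\cong}H_1(W_k;\Z)$ and a surjection $H_1(\Sigma_k(K');\Z)\twoheadrightarrow H_1(W_k;\Z)$. Dually, each character $\chi\in\operatorname{Hom}(H_1(\Sigma_k(K);\Z);\C^*)$ extends uniquely to a character on $W_k$ and restricts to a character on $\Sigma_k(K')$ which, under this identification, we also denote by $\chi$.

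To compare Casson--Gordon invariants, recall that $\tau(K;\chi)=\frac{1}{r}(\sigma_\chi(V)-\sigma(V))$ for any 4-manifold $V$ with $\partial V=r\cdot\Sigma_k(K)$ to which $\chi$ extends as a character of $H_1(V;\Z)$. Glueing $r$ parallel copies of $W_k$ onto $V$ yields a 4-manifold with boundary $r\cdot\Sigma_k(K')$ to which the extended character restricts as $\chi$ on the boundary, and Novikov additivity of ordinary and twisted signatures reduces $\tau(K';\chi)-\tau(K;\chi)$ to $\sigma_\chi(W_k)-\sigma(W_k)$. The main obstacle, and the technical heart of the argument, is to show that this difference vanishes. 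The plan for this step is to exploit the full strength of the $\pi_1$-isomorphism $S^3\setminus K\hookrightarrow X$---together with the asphericity of knot complements---to show that $(W_k,\Sigma_k(K))$ is a twisted homology cobordism in the sense that $H_*(W_k,\Sigma_k(K);\C_\chi)=0$ for all relevant prime-power-order characters, whence $\sigma(W_k)=\sigma_\chi(W_k)=0$ follows by the half-lives-half-dies argument applied in both the untwisted and twisted settings, completing the proof.
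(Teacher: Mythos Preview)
Your approach diverges from the paper's. The paper does not work with the concordance $A$ from Theorem~\ref{thm:main2}; instead it uses the explicit construction behind that theorem: $K'$ is obtained from $K$ by iterated satellite operations with companions of trivial Alexander polynomial and winding number zero (Proposition~\ref{prop:is_satellite}), and Litherland's formula \cite[Theorem~2]{Litherland} for Casson--Gordon invariants of satellites then gives the result directly.

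Your reduction to the vanishing of $\sigma_\chi(W_k)-\sigma(W_k)$ is sound, but the final step has a genuine gap. The $\pi_1$-isomorphism $\pi_1(S^3\setminus K)\cong\pi_1(X)$, combined with asphericity of $S^3\setminus K$, yields a retraction $X\to S^3\setminus K$ up to homotopy and hence split injections $H_*(S^3\setminus K;M)\hookrightarrow H_*(X;M)$ for every local coefficient system $M$. It does \emph{not} force these to be isomorphisms: the $4$-manifold $X$ may have nontrivial $\pi_2$ (nothing in the conclusion of Theorem~\ref{thm:main2} rules this out), and such classes contribute to $H_2$ with twisted coefficients, potentially making $\sigma_\chi(W_k)\neq 0$. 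To get the needed control on higher homology you must return to the specific satellite structure of the concordance---at which point Litherland's theorem is the efficient route.

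A smaller gap: your argument gives only a surjection $H_1(\Sigma_k(K'))\twoheadrightarrow H_1(W_k)\cong H_1(\Sigma_k(K))$, hence an injection of character groups rather than the identification claimed in the proposition. Equality of the groups holds because $K$ and $K'$ share a Seifert form (Remark~\ref{rem:SM}), but that has to be invoked separately.
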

\begin{proof}
  It follows from the proof of Theorem~\ref{thm:main2} that $K'$ can be obtained by an iterated satellite construction with companion knots that have trivial Alexander polynomial and
  winding number is zero (see Proposition~\ref{prop:is_satellite} below). The statement follows from \cite[Theorem 2]{Litherland}. See~\cite{LivSeifert} for an analogous argument.
\end{proof}
A result analogous to Proposition~\ref{prop:CG} can also be stated in the language of $\eta$-invariants of Levine and Friedl; see \cite{Friedl-eta}, using
\cite[Theorem 4.2]{Friedl-eta} as the main technical result. We do not give a precise statement here.


Given Theorem~\ref{thm:main1} one can ask whether the result can be generalized to smooth concordance. Here the answer is known to be negative by
Rudolph's slice-Bennequin inequality~\cite{rudolph_QPasObstruction}; compare also~\cite{Hedden_Positive,Livingston_Comp,Plamenevskaya_TB_number,Shumakovitch} where the values for the concordance invariants $s$ and $\tau$ are shown to be restricted on quasipositive knots.
In fact, the latter together with the existence of knots $K$ for which $\tau(K)$ and $-s(K)/2$ differ implies the following result:
\begin{proposition}\label{prop:not_smoothly}
  There exists a knot $K$ that is not smoothly concordant to any sum $P\#N$, where $P$ is a quasipositive knot and $N$ is quasinegative (the mirror of $N$ is quasipositive).
\end{proposition}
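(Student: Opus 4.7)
The plan is to exploit the well-known fact that the Ozsv\'ath--Szab\'o $\tau$-invariant and the Rasmussen $s$-invariant agree (up to the standard factor of $2$) on quasipositive knots, together with their additivity and mirror behaviour, in order to construct a single real-valued smooth concordance invariant that vanishes on every $P\#N$ of the stated type and is nonzero on some knot.

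Concretely, I would consider the map $K\mapsto \tau(K)-s(K)/2$. This is a smooth concordance invariant, additive under connected sum, and it changes sign under mirroring since both $\tau$ and $s$ do. By the slice--Bennequin type results cited in the excerpt (\cite{Hedden_Positive,Livingston_Comp,Plamenevskaya_TB_number,Shumakovitch}), every quasipositive knot $P$ satisfies $\tau(P)=g_4(P)=s(P)/2$, so $\tau(P)-s(P)/2=0$. Applying the mirror identity gives $\tau(N)-s(N)/2=0$ for every quasinegative knot $N$ as well, and then additivity yields
\[\tau(P\#N)-\frac{s(P\#N)}{2}=0\]
for every pair $(P,N)$ with $P$ quasipositive and $N$ quasinegative. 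By concordance invariance, the same quantity vanishes on every knot smoothly concordant to such a sum.

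To finish, I would invoke the hypothesis already recalled in the excerpt: the existence of a knot $K$ for which $\tau(K)$ and $s(K)/2$ disagree (for instance an example of Hedden--Ording or Livingston). For such a $K$ the invariant $\tau-s/2$ is nonzero, so $K$ cannot be smoothly concordant to any connected sum $P\#N$ of the required form, which is the content of the proposition.

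The argument is essentially formal once three ingredients are in place: (i) the coincidence $\tau=s/2=g_4$ on quasipositive knots, (ii) additivity, concordance invariance, and mirror-antisymmetry of $\tau$ and $s$, and (iii) a knot on which $\tau$ and $s/2$ differ. None of these requires new work here---they are quoted from the literature---so the only mild obstacle is bookkeeping: making sure that the sign conventions for $\tau$ and $s$ under mirroring are used consistently with the identities $\tau(P)=+g_4(P)$ and $s(P)=+2g_4(P)$ for quasipositive $P$, so that the invariant $\tau-s/2$ genuinely vanishes on both the quasipositive and the quasinegative summands.
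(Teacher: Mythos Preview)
Your argument is correct and is essentially the same as the paper's: both define the homomorphism $\tau-(\pm s/2)$ (the sign depending on one's convention for $s$), observe it vanishes on quasipositive knots by the cited slice--Bennequin results, extend this to quasinegative knots and their connected sums by additivity and mirror-antisymmetry, and conclude using a Hedden--Ording example where $\tau$ and $\pm s/2$ differ. The only cosmetic difference is that the paper phrases the vanishing on $P\#N$ as vanishing on the entire concordance subgroup generated by quasipositive knots, whereas you spell out the mirror step for $N$ explicitly.
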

\begin{proof}
	Let $\tau(K)$ be the Ozsv\'ath--Szab\'o $\tau$ invariant and $s(K)$ the Rasmussen's $s$-invariant. It follows from
	\cite{Hedden_Positive,Livingston_Comp,Plamenevskaya_TB_number,Shumakovitch} that if $K$ is quasipositive, then $\tau(K)=g_4(K)=-s(K)/2$.
	In particular, the map $K\mapsto \sigma(K):=\tau(K)-(-s(K)/2)$ vanishes on all quasipositive knots. It is also an additive concordance invariant, so it vanishes
  on the whole smooth concordance subgroup generated by quasipositive knots. However, it was shown by Hedden and Ording~\cite{Hedden_Ording} that $\sigma$ does not vanish on all knots.
\end{proof}
\begin{remark}
  One can use any difference of any two different slice-torus invariants (i.e.~group homomorphisms from the smooth concordance group to $\Z$ that equal $g_4$ on positive torus knots and are less than or equal to $g_4$ on all knots; compare Lewark~\cite{Lewark_advances}) instead of $\tau(K)-(-s(K)/2)$ in the above proof.
\end{remark}
The contrast between topological and smooth concordance of strongly quasipositive knots can also be seen by comparing the position of strongly
quasipositive knots in various filtrations of the knot concordance group.
\begin{proposition}\label{prop:chh_filter}\
\begin{itemize}
 \item Consider the solvable filtration $\mathcal{F}\supset \mathcal{F}_0\supset \mathcal{F}_{0.5}\supset \ldots$ of the topological concordance group $\mathcal{F}$ introduced by Cochran, Orr, and Teichner~\cite{Cochran_Orr_Teichner}.
For any integer $n\geq0$, there exists a strongly quasipositive knot $K$ that is $n$-solvable (i.e.~$[K]\in \mathcal{F}_{n}$) but not $(n+0.5)$-solvable (i.e.~$[K]\notin \mathcal{F}_{n+0.5}$).
 \item Consider the bipolar filtration of Cochran, Harvey and Horn~\cite{Cochran_Harvey_Horn}. The only strongly quasipositive knot $K$ that belongs to $\mathcal{B}_0=\mathcal{P}_0\cap\mathcal{N}_0$ (notation from~\cite{Cochran_Harvey_Horn}) is trivial.
	 \end{itemize}
 \end{proposition}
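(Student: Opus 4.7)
For the first bullet, my plan is to invoke Theorem~\ref{thm:main1} on knots already known to sit at the required level of the solvable filtration. Cochran, Orr, and Teichner~\cite{Cochran_Orr_Teichner} construct, for each integer $n\ge 0$, a knot $K_0$ with $[K_0]\in\mathcal{F}_n\setminus\mathcal{F}_{n+0.5}$. Because the sets $\mathcal{F}_k$ are defined on the topological concordance group, any knot topologically concordant to $K_0$ represents the same class and hence sits at the same filtration level. Applying Theorem~\ref{thm:main1} to $K_0$ produces a strongly quasipositive knot $K$ with $[K]=[K_0]\in\mathcal{F}_n\setminus\mathcal{F}_{n+0.5}$, which is the desired example. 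I anticipate no obstacle here; the argument is essentially a citation.

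For the second bullet, my plan is to combine two well-known inputs. On one hand, Cochran, Harvey, and Horn~\cite{Cochran_Harvey_Horn} show that $K\in\mathcal{P}_0$ implies $\tau(K)\le 0$, while $K\in\mathcal{N}_0$ implies $\tau(K)\ge 0$; in particular, every $K\in\mathcal{B}_0=\mathcal{P}_0\cap\mathcal{N}_0$ satisfies $\tau(K)=0$. On the other hand, as recalled in the proof of Proposition~\ref{prop:not_smoothly}, every quasipositive knot satisfies $\tau(K)=g_4(K)$, and Rudolph's slice-Bennequin inequality applied to the canonical Bennequin surface further gives $g_4(K)=g(K)$ for every strongly quasipositive knot. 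Combining these, a strongly quasipositive $K\in\mathcal{B}_0$ has $\tau(K)=g(K)=0$ and hence is the unknot.

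The only thing to double-check is that all invariants live in the correct category: the bipolar filtration is defined smoothly, and the relevant bounds on $\tau$ are smooth-category facts, so the chain of implications is internally consistent. I do not anticipate any genuine obstacle, and the second-bullet argument could equally well be run with the Rasmussen invariant $s$ in place of $\tau$.
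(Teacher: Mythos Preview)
Your argument matches the paper's proof essentially line for line: Theorem~\ref{thm:main1} plus a known knot at the desired filtration level for the first bullet, and $\tau(K)=0$ on $\mathcal{B}_0$ combined with $\tau=g_3$ for strongly quasipositive knots for the second. One small correction: the existence of knots in $\mathcal{F}_n\setminus\mathcal{F}_{n+0.5}$ for \emph{all} $n\geq 0$ is not in~\cite{Cochran_Orr_Teichner} (which treats only small $n$); the paper cites~\cite{CochranTeichner_07} for the general case.
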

\begin{proof}
	The first part is immediate from Theorem~\ref{thm:main1} and the fact that there are knots that are $n$-solvable but not $(n+0.5)$-solvable~\cite{CochranTeichner_07}. For the second part, note that
  if $K\in\mathcal{B}_0$ then, by~\cite[Corollary 4.9]{Cochran_Harvey_Horn} we have $\tau(K)=0$. If $K$ is strongly quasipositive, then by~\cite{Livingston_Comp} we have
  $g_3(K)=\tau(K)$. But the only knot with vanishing three-genus is the unknot.
\end{proof}
The exact position of strongly quasipositive knots in the smooth concordance group remains unknown. For example, Baker conjectures that concordant fibred strongly quasipositive knots are isotopic~\cite{Baker}.

We conclude this section by
noting the substantial difference between strongly quasipositive and quasipositive knots. While
there exist non-trivial topologically slice strongly quasipositive knots---one of the simplest instances being the Whitehead
double of the trefoil; see~\cite[Fig.~1]{Rudolph_84_SomeTopLocFlatSurf} or~\cite[Lemma~2]{rudolph_QPasObstruction}---the only smoothly slice strongly quasipositive knot is, by the slice-Bennequin inequality~\cite{Rudolph_83_ConstructionsOfQP1}, the unknot.
In contrast, Rudolph~\cite{Rudolph_83_ConstructionsOfQP1} constructed many
non-trivial smoothly slice quasipositive knots. This distinction is also stressed in Remark~\ref{rem:qpbandsvssqpbands}.

\subsection{Outline of Proof}
Theorem~\ref{thm:main1} follows immediately from Theorem~\ref{thm:main2}.
We now outline the proof of Theorem~\ref{thm:main2}.
The proof consists of a generalization of the proof of Theorem~\ref{thm:rudolph} combined with Freedman's disk theorem and the behavior of concordance under satellite operations.

Let $w$ be an $n$-braid with closure the link $L$. Write $w$ as a product of braids $a_{i,j}$ (defined in \eqref{eq:positive_braid_word})
and $a_{i,j}^{-1}$. 
Construct for this product the associated canonical Seifert surface $F$. It consists of $n$ disks 
connected with bands: an element $a_{i,j}^{\pm 1}$ corresponds
to a band connecting the $i$-th disk with the $j$-th one and the sign $\pm 1$ corresponds to the sign of twist.
As in~\cite{Rudolph_83_ConstructionsOfQP1} we replace each negative band by a band obtained by thickening a strongly quasipositive knot with a zero framing (Rudolph used a zero-framed positive trefoil knot, which suffices to get Theorem~\ref{thm:rudolph}).
A careful argument gives us control of the strong quasipositivity
of the resulting surface independently of what strongly quasipositive knot this is done with. More precisely, we
establish the following proposition which constitutes the bulk of our proof.

\begin{prop}\label{prop:main}
Let $F$ be a Seifert surface for some link $L\subset S^3$. There exist a finite number of closed bands $S_1, \ldots , S_t$ in $F$ such that the following holds. For any choice of $t$ non-trivial strongly quasipositive knots $K_1$,\dots, $K_t$, the surface $P$ obtained from $F$ by tying a zero-framed $K_\ell$ into the band $S_\ell$ is strongly quasipositive.

If $F$ is the canonical Seifert surface associated with a braid $w$ given as a product $\prod a_{i_k,j_k}^{\epsilon_k}$ with $\epsilon_k=\pm 1$ and $a_{i,j}$ as in
\eqref{eq:positive_braid_word}, then $t$ can be taken to be the number of indices $k$ for which $\epsilon_k$ is negative.
\end{prop}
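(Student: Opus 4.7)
The plan is to construct $P$ as the canonical Seifert surface of a strongly quasipositive braid on a suitable stabilization of the original $n$-braid, generalizing Rudolph's proof of Theorem~\ref{thm:rudolph}---in which each $K_\ell$ is specifically a positive trefoil---to arbitrary strongly quasipositive $K_\ell$. I would first assume $F$ is the canonical braided Seifert surface of $w=\prod_k a_{i_k,j_k}^{\epsilon_k}$, built from $n$ parallel disks connected by one twisted band per letter of $w$; if $F$ is not in this form, convert it to such a braided surface by standard techniques (see~\cite{Rudolph_handbook}). Declare $S_1,\dots,S_t$ to be the bands corresponding to the negative letters $a_{i_k,j_k}^{-1}$. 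This identification immediately yields the final sentence of the proposition, and it remains to show that the surface $P$ obtained by tying a zero-framed strongly quasipositive $K_\ell$ into each $S_\ell$ is strongly quasipositive, independently of the chosen $K_\ell$.

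The heart of the argument is a local modification performed near each $S_\ell$. Since $K_\ell$ is strongly quasipositive, it bounds a Bennequin surface $\Sigma_\ell$ made of $m_\ell$ parallel disks joined by positive bands of type $a_{i,j}$. Near $S_\ell$, I would stabilize the braid by introducing $m_\ell$ new parallel strands inside a small tube and place a braided copy of $\Sigma_\ell$ there. The zero-framing condition on $K_\ell$ is exactly what is needed so that tying $K_\ell$ into $S_\ell$ agrees, as a local surface piece, with plumbing $\Sigma_\ell$ into the tube and rewriting the single negative letter $a_{i,j}^{-1}$ as a specific product of positive generators $a_{i',j'}$ on the enlarged strand set. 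Performing this replacement at all $t$ negative bands inside disjoint tubes produces a braid word on $n+\sum_\ell m_\ell$ strands that is a product of the $a_{i',j'}$ only, whose canonical Seifert surface is $P$. Thus $P$ is strongly quasipositive.

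The main obstacle is the explicit local braid identity that rewrites $a_{i,j}^{-1}$ together with the positive word for $\Sigma_\ell$ as an entirely positive word on the stabilized strands. The zero framing is essential here: it ensures that the framing contribution of $\Sigma_\ell$ cancels the $-1$ twist of $S_\ell$, leaving only positive crossings. Once this identity is verified for a single negative band, the rest of the proof is bookkeeping---choosing strand labels consistently, arranging that the stabilizations at distinct $S_\ell$ occur in disjoint tubes so their modifications do not interact, and checking that the canonical Seifert surface of the resulting positive word is ambient isotopic to $P$. Because the identity depends only on the existence of a strongly quasipositive presentation of $K_\ell$ and not on its specific braid word, the construction is uniform in the $K_\ell$'s, so a single verification handles every strongly quasipositive choice simultaneously.
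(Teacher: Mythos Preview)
Your overall architecture matches the paper's: present $F$ as a canonical braided surface, take the $S_\ell$ to be the negative bands, and replace each $a_{i,j}^{-1}$ by a strongly quasipositive word on extra strands so that the resulting canonical surface is $P$. But the description of the local replacement contains a real gap.

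The object that must be inserted in place of $S_\ell$ is a \emph{band}: topologically a disc, carrying a $-1$ twist and knotted like $K_\ell$. Your proposal instead inserts the full Bennequin surface $\Sigma_\ell$; plumbing $\Sigma_\ell$ into the tube changes the genus and does not produce $P$. The claim that ``the zero-framing condition ensures that the framing contribution of $\Sigma_\ell$ cancels the $-1$ twist of $S_\ell$'' is not correct: the replacement band must still have net framing $-1$, and no amount of zero-framing of $K_\ell$ makes a single negative crossing disappear into positive $a_{i',j'}$'s for free. What the paper actually does is work with the zero-framed \emph{annulus} $A_\ell\subset\Sigma_\ell$ (the collar of $\partial\Sigma_\ell$). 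Because $K_\ell$ is non-trivial, $\Sigma_\ell$ is not a disc, so $A_\ell$ is $\pi_1$-injective in $\Sigma_\ell$ and hence is itself a quasipositive surface by Rudolph's characterization theorem. One then takes a strongly quasipositive word $w_{A_\ell}$ for $A_\ell$, arranges (Lemma~\ref{lem:twogens}) that exactly two of its generators touch the first strand, and ``cuts the annulus open'' by shifting those two generators onto different strands. The result is a strongly quasipositive word whose canonical surface is a disc that, as a band, is knotted like $K_\ell$ and carries a $-1$ twist; this is what replaces $a_{i,j}^{-1}$. Your outline never produces this cut-open band, never invokes quasipositivity of the annulus, and never uses non-triviality of $K_\ell$---which is precisely the hypothesis that makes the annulus argument go through. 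Without these ingredients the promised ``local braid identity'' cannot be written down.
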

Theorem~\ref{thm:main2} follows from Proposition~\ref{prop:main} by choosing the $K_\ell$ to be strongly quasipositive knots with Alexander polynomial one, invoking a consequence of Freedman's disk embedding theorem to establish that $K_\ell$ is topologically slice~\cite[Theorem~1.13]{Freedman_82_TheTopOfFour-dimensionalManifolds}, and using that a satellite link is concordant to its pattern if the companion is a slice knot.
\begin{remark}\label{rem:SM}
  The procedure described in Proposition~\ref{prop:main} does not change the Seifert form associate to the surface $F$. As a consequence,
  our construction of the link $L'$ from Theorem~\ref{thm:main2} is such that $L$ and $L'$ come with Seifert surfaces with canonically identified Seifert forms.
\end{remark}

\begin{acknowledgments}
The first author is supported by the Polish National Science Center grant 2016/22/E/ST1/00040. The authors express their gratitude to Sebastian Baader, Michel Boileau, and Wojciech Politarczyk for stimulating discussions.
We are indebted to Chuck Livingston, Mark Powell and Lee Rudolph for their comments on the preliminary version of the manuscript.
\end{acknowledgments}

\section{Proof of Proposition~\ref{prop:main}}\label{sec:proofOfProp}
In this section, we first recall details on the notions used in Proposition~\ref{prop:main} and then provide the proof of Proposition~\ref{prop:main}.

Let $F$ be a \emph{Seifert surface}---a smooth oriented compact surface in $S^3$ with no closed components. 
Up to isotopy $F$ arises as the canonical surface associated with a braid $w\in B_n$ for some $n$ that is written as a product of generators
\begin{equation}\label{eq:wprod}
w=\prod a_{i_k,j_k}^{\epsilon_k},
\end{equation}
where $a_{i,j}$ are as in~\eqref{eq:positive_braid_word}, $i_k,j_k$ are some index pairs and $\epsilon_k=\pm 1$;
see~\cite[Proposition~1]{Rudolph_83_ConstructionsOfQP1}. We refer to such a $w$, a braid together with a choice of product as in~\eqref{eq:wprod}, as \emph{braid word} or \emph{$n$-braid word}. If all $\epsilon_k$ are $1$, we refer to $w$ as a \emph{strongly quasipositive braid word}.

Here, the \emph{canonical surface} associated to such a $n$-braid word $w$ 
is the 
Seifert surface given by $n$ discs (one corresponding to each strand) and bands connecting the discs (one for each $a_{i,j}^{\pm 1}$).
The canonical surface is illustrated for an example in Figure~\ref{fig:QPA}. The vertical strands correspond to disks and the horizontal hooked strands
are the bands. The upward hook represents a positively twisted band, while the downward hook, like the bottom hook in Figure~\ref{fig:w},
represents a negatively twisted band; in fact, the canonical surface arises as the blackboard framed thickening of the depicted graph. We refer to Rudolph's account in~\cite[$\S$2]{Rudolph_83_BraidedSurfaces} and~\cite[$\S$1]{Rudolph_92_ConstructionsIII} for more details on canonical Seifert surfaces. 
A Seifert surface is said to be a \emph{quasipositive surface}, if it arises as the canonical surface associated to a strongly quasipositive braid word. 
Note that strongly quasipositive links are the links that arise as the boundary of a quasipositive surface.

We call a subset $S$ of a Seifert surface $F$ a \emph{closed band} if $S$ arises as the intersection of $F$ with a cylinder $Z\subset S^3$ as follows: there is a orientation preserving diffeomorphism $\Psi$ of triples of manifolds with corners
\[\Psi\colon(Z, Z\cap F=S, Z\cap \partial F)\to(D^2\times[0,1],[-\tfrac{1}{2},\tfrac{1}{2}]\times[0,1],\{-\tfrac{1}{2},\tfrac{1}{2}\}\times[0,1]),\]
where $D^2\subset \C$ denotes the unit disk and $[-\tfrac{1}{2},\tfrac{1}{2}]$ denotes the straight line segment connecting $-\tfrac{1}{2}$ and $\tfrac{1}{2}$ in $D^2$; see left-hand side of Figure~\ref{fig:tangle}. (Here standard orientations on $D^2\times[0,1]$ and $[-\tfrac{1}{2},\tfrac{1}{2}]\times[0,1]$ are chosen, e.g.~induced from the standard orientations of $\R^3$ and $\R^2$, respectively, and the orientation on $\{-\tfrac{1}{2},\tfrac{1}{2}\}\times[0,1]$ is the one induced from $[-\tfrac{1}{2},\tfrac{1}{2}]\times[0,1]$.)
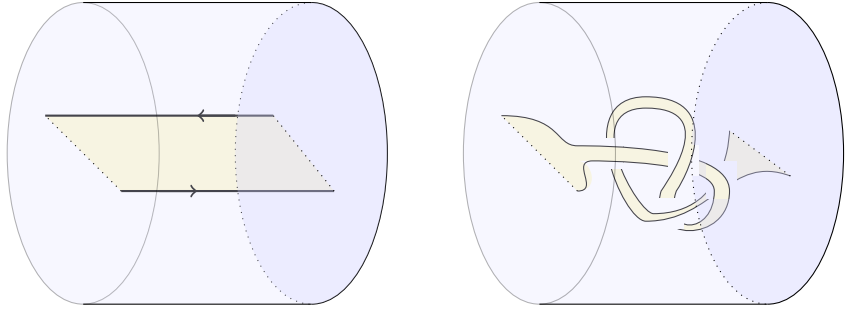
\begin{figure}[h]
\begin{tikzpicture}
  \begin{scope}[xshift=-3cm]
    \fill[yellow!20,draw=none] (0,0) -- (0.5,-0.5) -- (3.3,-0.5)--  (2.5,0.5) -- (-0.5,0.5)--  (0,0);
    \draw[thick] (0,0) (0.5,-0.5) -- (3.3,-0.5) (2.5,0.5) -- (-0.5,0.5) (0,0);
    \draw[thick,->] (1,-0.5) -- (1.5,-0.5);
    \draw[thick,->] (2,0.5) -- (1.5,0.5);
    \draw[fill=blue!10,opacity=0.3] (0,0) ellipse (1 and 2);
    \draw[fill=blue!20,opacity=0.3,draw=none] (3,0) ellipse (1 and 2);
    \draw[dotted] (3,2) arc [start angle=90, delta angle=180, x radius=1, y radius=2];
    \draw (3,2) arc [start angle=90, delta angle=-180, x radius=1, y radius=2];
    \fill[draw=none, blue!10,opacity=0.3] (0,2) arc [start angle=90, delta angle=-180,x radius=1,y radius=2] -- (3,-2) arc [start angle=270,delta angle=180, x radius=1,y radius=2] -- (0,2);
    \draw (0,2) -- (3,2);
    \draw (0,-2) -- (3,-2);
    \draw[thin,dotted] (0.5,-0.5) -- (-0.5,0.5);
    \draw[thin,dotted] (3.3,-0.5) -- (2.5,0.5);
\end{scope}

  \begin{scope}[xshift=3cm]
    \fill[yellow!20,draw=none]
    (-0.5,0.5) .. controls (0.3,0.5) and (0.2,0.1) .. (0.5,0.1) .. controls (0.8,0.1) and (2.5,0.1) .. (2.5,-0.5) .. controls (2.5,-1.2) and (1.4,-1.2) .. (1.6,-0.5) ..
    controls (1.8,-0.2) and (2.3,0.6) .. (1.4,0.6) .. controls (0.5,0.6) and (1.3,-0.8) .. (1.5,-0.8) .. controls (2.5,-0.8) and (2.5,0.3) .. (2.5,0.3) --
    (3.3,-0.3) .. controls (2.5,0.0) and (2.3,-0.9) .. (1.4,-0.9) .. controls (1.1,-0.9) and (0.4,0.75) .. (1.4,0.75) .. controls (2.4,0.75) and (2.0,-0.2) ..
    (1.8,-0.5) .. controls (1.6,-1.1) and (2.3,-1.1) .. (2.3,-0.5) .. controls (2.3,-0.1) and (0.8,-0.1) .. (0.5,-0.1) .. controls (0.7,-0.1) and (0.8,-0.5) .. (0.5,-0.5) -- (-0.5,0.5);
    \draw (-0.5,0.5) .. controls (0.3,0.5) and (0.2,0.1) .. (0.5,0.1) .. controls (0.8,0.1) and (2.5,0.1) .. (2.5,-0.5) .. controls (2.5,-1.2) and (1.4,-1.2) .. (1.6,-0.5) ..
    controls (1.8,-0.2) and (2.3,0.6) .. (1.4,0.6) .. controls (0.5,0.6) and (1.3,-0.8) .. (1.5,-0.8) .. controls (2.5,-0.8) and (2.5,0.3) .. (2.5,0.3);
    \draw (0.5,-0.5) .. controls (0.8,-0.5) and (0.4,-0.1) .. (0.7,-0.1) .. controls (0.8,-0.1) and (2.3,-0.1) .. (2.3,-0.5) .. controls (2.3,-1.1) and (1.6,-1.1) .. (1.8,-0.5) ..
    controls (2.0,-0.2) and (2.4,0.75) .. (1.4,0.75) .. controls (0.4,0.75) and (1.1,-0.9) .. (1.4,-0.9) .. controls (2.3,-0.9) and (2.5,0.0) .. (3.3,-0.3);

    \draw[fill=blue!20,opacity=0.3,draw=none] (3,0) ellipse (1 and 2);
    \fill[white,draw=none] (0.8,-0.2) rectangle (1.1,0.2);
    \fill[white,draw=none] (1.7,-0.3) rectangle (2.1,0.1);
    \fill[white,draw=none] (2.2,-0.6) rectangle (2.6,-0.1);
    \fill[blue!20,opacity=0.3,draw=none] (2.2,-0.6) rectangle (2.6,-0.1);
    \fill[white,draw=none] (1.5,-1) rectangle (1.9,-0.6);
    \begin{scope}
      \clip (0.8,-0.2) rectangle (1.1,0.2);
    \fill[yellow!20,draw=none]
    (-0.5,0.5) .. controls (0.3,0.5) and (0.2,0.1) .. (0.5,0.1) .. controls (0.8,0.1) and (2.5,0.1) .. (2.5,-0.5)--
    (2.3,-0.5) .. controls (2.3,-0.1) and (0.8,-0.1) .. (0.5,-0.1) .. controls (0.7,-0.1) and (0.8,-0.5) .. (0.5,-0.5) -- (-0.5,0.5);
    \draw (-0.5,0.5) .. controls (0.3,0.5) and (0.2,0.1) .. (0.5,0.1) .. controls (0.8,0.1) and (2.5,0.1) .. (2.5,-0.5);
    \draw (0.5,-0.5) .. controls (0.8,-0.5) and (0.4,-0.1) .. (0.7,-0.1) .. controls (0.8,-0.1) and (2.3,-0.1) .. (2.3,-0.5);
    \end{scope}
    \begin{scope}
      \clip (1.7,-0.3) rectangle (2.1,0.1);
    \fill[yellow!20,draw=none]
    (1.6,-0.5) ..
    controls (1.8,-0.2) and (2.3,0.6) .. (1.4,0.6) .. controls (0.5,0.6) and (1.3,-0.8) .. (1.5,-0.8) .. controls (2.5,-0.8) and (2.5,0.3) .. (2.5,0.3) --
    (3.3,-0.3) .. controls (2.5,0.0) and (2.3,-0.9) .. (1.4,-0.9) .. controls (1.1,-0.9) and (0.4,0.75) .. (1.4,0.75) .. controls (2.4,0.75) and (2.0,-0.2) ..
    (1.8,-0.5) -- (1.6,-0.5);
    \draw (1.6,-0.5) ..
    controls (1.8,-0.2) and (2.3,0.6) .. (1.4,0.6) .. controls (0.5,0.6) and (1.3,-0.8) .. (1.5,-0.8) .. controls (2.5,-0.8) and (2.5,0.3) .. (2.5,0.3);
    \draw (1.8,-0.5) ..
    controls (2.0,-0.2) and (2.4,0.75) .. (1.4,0.75) .. controls (0.4,0.75) and (1.1,-0.9) .. (1.4,-0.9) .. controls (2.3,-0.9) and (2.5,0.0) .. (3.3,-0.3);
    \end{scope}
    \begin{scope}
      \clip(1.5,-1) rectangle (1.9,-0.6);
    \fill[yellow!20,draw=none]
    (1.6,-0.5) ..
    controls (1.8,-0.2) and (2.3,0.6) .. (1.4,0.6) .. controls (0.5,0.6) and (1.3,-0.8) .. (1.5,-0.8) .. controls (2.5,-0.8) and (2.5,0.3) .. (2.5,0.3) --
    (3.3,-0.3) .. controls (2.5,0.0) and (2.3,-0.9) .. (1.4,-0.9) .. controls (1.1,-0.9) and (0.4,0.75) .. (1.4,0.75) .. controls (2.4,0.75) and (2.0,-0.2) ..
    (1.8,-0.5) -- (1.6,-0.5);
    \draw (1.6,-0.5) ..
    controls (1.8,-0.2) and (2.3,0.6) .. (1.4,0.6) .. controls (0.5,0.6) and (1.3,-0.8) .. (1.5,-0.8) .. controls (2.5,-0.8) and (2.5,0.3) .. (2.5,0.3);
    \draw (1.8,-0.5) ..
    controls (2.0,-0.2) and (2.4,0.75) .. (1.4,0.75) .. controls (0.4,0.75) and (1.1,-0.9) .. (1.4,-0.9) .. controls (2.3,-0.9) and (2.5,0.0) .. (3.3,-0.3);
    \end{scope}
    \begin{scope}
      \clip(2.2,-0.6) rectangle (2.6,-0.1);
    \fill[yellow!20,draw=none]
    (-0.5,0.5) .. controls (0.3,0.5) and (0.2,0.1) .. (0.5,0.1) .. controls (0.8,0.1) and (2.5,0.1) .. (2.5,-0.5) .. controls (2.5,-1.2) and (1.4,-1.2) .. (1.6,-0.5) --
    (1.8,-0.5) .. controls (1.6,-1.1) and (2.3,-1.1) .. (2.3,-0.5) .. controls (2.3,-0.1) and (0.8,-0.1) .. (0.5,-0.1) .. controls (0.7,-0.1) and (0.8,-0.5) .. (0.5,-0.5) -- (-0.5,0.5);
    \draw (-0.5,0.5) .. controls (0.3,0.5) and (0.2,0.1) .. (0.5,0.1) .. controls (0.8,0.1) and (2.5,0.1) .. (2.5,-0.5) .. controls (2.5,-1.2) and (1.4,-1.2) .. (1.6,-0.5);
    \draw (0.5,-0.5) .. controls (0.8,-0.5) and (0.4,-0.1) .. (0.7,-0.1) .. controls (0.8,-0.1) and (2.3,-0.1) .. (2.3,-0.5) .. controls (2.3,-1.1) and (1.6,-1.1) .. (1.8,-0.5);
    \end{scope}
    \fill[draw=none, blue!10,opacity=0.3] (0,2) arc [start angle=90, delta angle=-180,x radius=1,y radius=2] -- (3,-2) arc [start angle=270,delta angle=180, x radius=1,y radius=2] -- (0,2);
    \draw[fill=blue!10,opacity=0.3] (0,0) ellipse (1 and 2);
    \draw[dotted] (3,2) arc [start angle=90, delta angle=180, x radius=1, y radius=2];
    \draw (3,2) arc [start angle=90, delta angle=-180, x radius=1, y radius=2];
    \draw (0,2) -- (3,2);
    \draw (0,-2) -- (3,-2);
    \draw[thin,dotted] (0.5,-0.5) -- (-0.5,0.5);
    \draw[thin,dotted] (3.3,-0.3) -- (2.5,0.3);
\end{scope}
\end{tikzpicture}
\caption{Left: The triple $(D^2\times[0,1],[-\tfrac{1}{2},\tfrac{1}{2}]\times[0,1],\{-\tfrac{1}{2},\tfrac{1}{2}\}\times[0,1])$.
The tuple $(D^2\times[0,1],[-\tfrac{1}{2},\tfrac{1}{2}]\times[0,1])$ can be interpreted as a zero-framed 1-tangle of knot type the unknot (the core $\{0\}\times[0,1]$ of $[-\tfrac{1}{2},\tfrac{1}{2}]\times[0,1]$ is an unknot (as a long knot)).
Right: the zero-framed 1-tangle of knot type the figure eight knot.}
\label{fig:tangle}
\end{figure}
In the construction of the canonical surface from a braid word $w$, every band corresponding to $a_{i,j}^{\pm1}$ can be understood as a closed band in this sense. We say a Seifert surface $F'$ arises from $F$ by \emph{tying a zero-framed knot} $K$ into the closed band $S$, if $F'=(F\setminus S)\cup \Psi^{-1}(T_K)$, where $(D^2\times[0,1],T_K)$ denotes a so-called zero-framed 1-tangle of knot type $K$; i.e.~$T_K$ is the image of a smooth embedding $\phi\colon[-\tfrac{1}{2},\tfrac{1}{2}]\times[0,1]\to D^2\times [0,1]$ such that $\phi$ restricts to the identity in a neighboorhood of $[-\tfrac{1}{2},\tfrac{1}{2}]\times\{0,1\}$, $\phi(\{0\}\times[0,1])$ is the knot $K$ (as a long knot), and $\phi(\{-\tfrac{1}{2}\}\times[0,1])$ and $\phi(\{\tfrac{1}{2}\}\times[0,1])$ have algebraic linking number zero; see Figure~\ref{fig:tangle}. Considering only the effect on the link $\partial F$, tying a knot $K$ into $F$ corresponds to a often studied operation in knot theory: a satellite operation with companion $K$ and pattern $\partial F$ viewed in the appropriately framed solid torus that is given as the complement of an open neighborhood of the mantle of the cylinder $Z$. We discuss this in more detail in Section~\ref{sec:proofOfThm}. 

We also note that the construction of tying a knot into a closed band $S$ depends on how $S$ is identified with $[-\tfrac{1}{2},\tfrac{1}{2}]\times[0,1]$ (rotation of 180 degrees on $[-\tfrac{1}{2},\tfrac{1}{2}]\times[0,1]$ amounts to a change in orientation of the knot tied into $S$); however, we will suppress this subtlety, which is justified by the fact that a knot is strongly quasipositive if and only if the same knot with reversed orientation is strongly quasipositive.

\begin{proof}[Proof of Proposition~\ref{prop:main}]
Let $w$ be a $n$-braid word for some positive integer $n$ such that $F$ is isotopic to the canonical surface associated with $w$.
Let $t$ be the number of occurrences of generators $a_{i,j}$ in $w$ 
with negative power, say they are (in order of appearance) $g_1=a_{i_1,j_1}^{-1},g_2=a_{i_2,j_2}^{-1},\ldots, g_t=a_{i_t,j_t}^{-1}$. We set $S_1,\ldots, S_t\subset F$ to be the closed bands corresponding to these generators.
Our strategy is to describe explicitly a strongly quasipositive braid word $v$ that has $P$, the surface obtained from $F$ by tying zero-framed $K_\ell$ into $S_\ell$, as its canonical surface. This is done by replacing  occurrences of $a_{i,j}^{-1}$ in \eqref{eq:wprod}
with strongly quasipositive braid word with associated canonical surface an appropriately framed and knotted band.
Rudolph in \cite{Rudolph_83_ConstructionsOfQP1} showed this is possible for $K_\ell$ being a trefoil.
The novelty of our proof is that we establish that this can be done for any strongly quasipositive knot $K_\ell$ that is not the unknot.

Let $A=A_\ell$ be the zero-framed annulus given as the zero-framing of $K=K_\ell$ (until further notice we suppress the index $\ell$, remember, that the procedure
is performed for all $1\leq \ell\leq n$).
The annulus $A$ is a quasipositive surface; see~\cite[Lemma~1]{rudolph_QPasObstruction}. We recall the short argument: let $Q$ be a quasipositive surface with boundary $K$. Note that a small closed neighborhood of $K=\partial Q$ in $Q$ is a zero-framing of $K$ and, thus, $A$ embeds $\pi$-injective in $Q$. For the latter, we used that $K$ is not the unknot: a neighborhood of the boundary of a connected oriented surface $Q$ with connected boundary $K$ is a full subsurface of $Q$ (i.e.~$\pi_1$-injectively embedded) as long as $Q$ is not a disc. By Rudolph~\cite[Characterization Theorem]{Rudolph_92_ConstructionsIII}, full subsurfaces of quasipositive surfaces are quasipositive surfaces.
\begin{remark}\label{rem:qpbandsvssqpbands}
  The assumption that $K$ is strongly quasipositive and not only quasipositive is subtle, but essential. If $K$ is quasipositive, the annulus $A$ will not be a quasipositive surface in general.
  Otherwise, we could tie a quasipositive smoothly slice knot $K$ into $F$, and by repeating our argument for Theorem~\ref{thm:main1} in the smooth category, we would show that every knot is smoothly concordant to a quasipositive knot, which
  contradicts Proposition~\ref{prop:not_smoothly}.
\end{remark}

Since $A$ is a quasipositive surface, it arises as the canonical surface associated with a strongly quasipositive braid word. Let $w_A=\prod a_{i_k,j_k}$ be such a $n_A$-braid word, for some positive integer $n_A$.
We refer to Figure~\ref{fig:QPA} for an example where $A$ is the zero-framed positive trefoil and $w_A$ is the $6$-braid word $a_{2,6}a_{1,4}a_{2,5}a_{4,6}a_{3,5}a_{1,3}$.
\begin{lemma}\label{lem:twogens}
  We may arrange $w_A$ in such a way that among the generators in $w_A$ exactly two start at strand $1$,
  that is, exactly two generators of the form $a_{1,j}$ occur in $w_A$.
\end{lemma}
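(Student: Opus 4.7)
The plan is to show that one can choose the strongly quasipositive braid word $w_A$ presenting $A$ so that its \emph{band graph} $G$ -- vertices the strands $\{1,\ldots,n_A\}$, edges the generators $a_{i,j}$ in $w_A$ (each joining strand $i$ to strand $j$) -- is a single simple cycle. Once $G$ is a simple cycle, every strand has degree exactly $2$ in $G$; in particular, strand $1$ is incident to exactly two bands, and these are necessarily of the form $a_{1,j}$ since $1$ is the minimum index.

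I first observe that, for any strongly quasipositive braid word $w_A$ presenting $A$ as canonical surface, $G$ is connected with $|V(G)|=|E(G)|=n_A$: connectedness follows from connectedness of $A$, and the equality $|V|=|E|$ follows from $\chi(G)=|V|-|E|=\chi(A)=0$, using that the canonical surface deformation-retracts onto $G$ and that $A$ is an annulus. Hence $G$ has first Betti number $1$: it is a single simple cycle with (possibly empty) trees attached.

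The main step is iterative leaf-pruning. If some strand $v$ is a leaf of $G$, its unique incident band together with disk $v$ forms an outer sub-disk of the canonical surface which an ambient isotopy absorbs into its neighbouring disk. The resulting surface is the canonical surface of a strongly quasipositive $(n_A-1)$-braid word -- obtained from $w_A$ by deleting the single generator at $v$ and shifting the labels of strands above $v$ down by one -- whose band graph has one fewer vertex and one fewer edge, and in particular still satisfies $|V|=|E|$. Iterating until no leaves remain, every remaining vertex has degree $\geq 2$; combined with $|V|=|E|$, the degree-sum identity forces every vertex to have degree exactly $2$, so the final band graph is a simple cycle, as desired.

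The main point to verify carefully is that the leaf-pruning move really corresponds to a strong-quasipositivity-preserving transformation of the braid word. I would justify this by direct inspection of the braided surface structure near the leaf: the leaf sub-disk occurs as a standard outer half-disc, and the braid word obtained by the delete-and-shift prescription above is visibly another product of generators $a_{i',j'}$ with $i'<j'$, hence still strongly quasipositive. Modulo this geometric check, the argument reduces to a short combinatorial remark about connected multigraphs with $|V|=|E|$.
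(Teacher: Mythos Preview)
Your argument is correct and is essentially the paper's own proof recast in graph-theoretic language: the paper also prunes degree-one strands (calling it a Markov move on the canonical surface) and then uses that $A$ has first Betti number one to force every remaining strand to be hit by exactly two generators. The only cosmetic quibble is the phrase ``simple cycle'': after pruning, the $2$-regular connected multigraph could in principle have a double edge, but this does not affect the conclusion that strand $1$ meets exactly two bands.
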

\begin{proof}
  In the proof we will actually show more, namely that $w_A$ can be arranged in such a way that for every strand (not only for the first one) there are precisely two generators starting
  or ending at this strand.

  Let $w_A$ be a strongly quasipositive $n_A$-braid word with associated canonical surface $A$.
  Suppose there is a strand $k$ such that precisely one generator in $w_A$ starts or ends at this strand; i.e.~is of form $a_{i,k}$ or $a_{k,j}$. Delete this generator and in all other generators replace $i$ ($j$) with $i-1$ $(j-1)$ whenever $i>k$ ($j>k$) (this corresponds to `deleting the whole strand $k$') producing a strongly quasipositive $(n_A-1)$-braid word. For the associated link and the associated canonical surface, this corresponds to a Markov move; in particular, this operation does not change
  the isotopy type of the canonical surface. Perform this operation finitely many times until we have a strongly quasipositive braid word, which, by abuse of notation, we still denote by $w_A$, and
   such that for all strands $k$ at least two generators in $w_A$ start or end at strand $k$. Since $A$ is an annulus, this implies that for all strands $k$ precisely two generators in $w_A$ start or end at strand $k$, for otherwise the first Betti number of $A$ is at least
  two.
\end{proof}
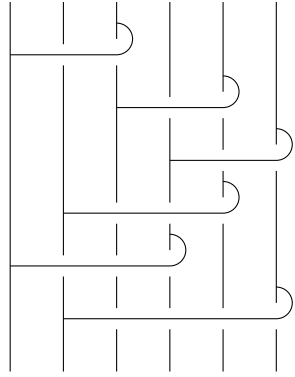
\begin{figure}[h]
  \begin{tikzpicture}
    \begin{scope}[xshift=3cm,scale=0.7]
      \draw(0,0) -- (0,7);
      \draw(1,0) -- (1,1.8) (1,2.2) -- (1,5.8)  (1,6.2) -- (1,7);
      \draw(2,0) -- (2,0.8) (2,1.2) -- (2,1.8) (2,2.2) -- (2,2.8) (2,3.2) -- (2,5.8) (2,6.3) -- (2,7);
      \draw(3,0) -- (3,0.8) (3,1.2) -- (3,1.8) (3,2.3) -- (3,2.8) (3,3.2) -- (3,4.8) (3,5.2) -- (3,7);
      \draw(4,0) -- (4,0.8) (4,1.2) -- (4,2.8) (4,3.3) -- (4,3.8) (4,4.2) -- (4,4.8) (4,5.3) -- (4,7);
      \draw(5,0) -- (5,0.8) (5,1.3) -- (5,3.8) (5,4.3) -- (5,7);
      \draw(0,6) -- (2,6) arc [start angle=270, delta angle=180, radius=0.3];
      \draw(2,5) -- (4,5) arc [start angle=270, delta angle=180, radius=0.3];
      \draw(3,4) -- (5,4) arc [start angle=270, delta angle=180, radius=0.3];
      \draw(1,3) -- (4,3) arc [start angle=270, delta angle=180, radius=0.3];
      \draw(0,2) -- (3,2) arc [start angle=270, delta angle=180, radius=0.3];
      \draw(1,1) -- (5,1) arc [start angle=270, delta angle=180, radius=0.3];
    \end{scope}
  \end{tikzpicture}
  \caption{The zero-framed annulus $A$ of knot type $T_{2,3}$ as the canonical surface associated to the 6-braid word $a_{2,6}a_{1,4}a_{2,5}a_{4,6}a_{3,5}a_{1,3}$.}
\label{fig:QPA}
\end{figure}
Next we modify the word $w_A$, making precise the following idea about its associated canonical Seifert surface $A$: turning the annulus $A$ into a quasipositive band that can replace the corresponding band in $F$.
We can and do assume that $w_A$ satisfies the statement of Lemma~\ref{lem:twogens},
that is, there are exactly two generators in $w_A$ that start at the first strand. Suppose these are $a_{1,j}$ and $a_{1,j'}$ and $a_{1,j}$ occurs first
in the braid word $w_A$.
We build a new strongly quasipositive $(n_A+1)$-braid word $v_A$ as follows.
Replace $a_{1,j}$ by $a_{2,j+1}$, $a_{1,j'}$ by $a_{1,j'+1}$, and all the other generators $a_{i,j}$ in $w_A$ by $a_{i+1,j+1}$.
Figure~\ref{fig:v_A} illustrates $v_A$ (ignore the dotted line in that figure until further notice).
\begin{figure}[h]
  \begin{tikzpicture}
    \begin{scope}[xshift=-3cm,scale=0.7]
      \draw(0,0) -- (0,7.3);
      \draw(1,0) -- (1,1.8) -- (1,2.2) -- (1,5.8)  (1,6.2) -- (1,7.4);
      \draw(2,0) -- (2,1.8) (2,2.2) -- (2,5.8) (2,6.2) -- (2,7.4);
      \draw(3,0) -- (3,0.8) (3,1.2) --  (3,1.8) (3,2.2) -- (3,2.8) (3,3.2) -- (3,5.8) (3,6.3) -- (3,7.4);
      \draw(4,0) -- (4,0.8) (4,1.2) -- (4,1.8) (4,2.3) -- (4,2.8) (4,3.2) -- (4,4.8) (4,5.2) -- (4,7.4);
      \draw(5,0) -- (5,0.8) (5,1.2) -- (5,2.8) (5,3.3) -- (5,3.8) (5,4.2) -- (5,4.8) (5,5.3) -- (5,7.4);
      \draw(6,0) -- (6,0.8) (6,1.3) -- (6,3.8) (6,4.3) -- (6,7.4);
      \draw[thick, densely dotted,blue!50!black] (1,7.4)-- (1,7.6) -- (7,7.6) arc [start angle=270, delta angle=180, radius=0.3] -- (7,7.8) (7,7.4) -- (7,0);
      \draw(0,6) -- (3,6) arc [start angle=270, delta angle=180, radius=0.3];
      \draw(3,5) -- (5,5) arc [start angle=270, delta angle=180, radius=0.3];
      \draw(4,4) -- (6,4) arc [start angle=270, delta angle=180, radius=0.3];
      \draw(2,3) -- (5,3) arc [start angle=270, delta angle=180, radius=0.3];
      \draw(1,2) -- (4,2) arc [start angle=270, delta angle=180, radius=0.3];
      \draw(2,1) -- (6,1) arc [start angle=270, delta angle=180, radius=0.3];
      \draw[ultra thick,green!50!black] (-0.3,0.05) -- (0.3,0.05);
      \draw[ultra thick,green!50!black] (0.7,7.35) -- (1.3,7.35);
    \end{scope}
    \begin{scope}[xshift=3cm, scale=0.4,yshift=3cm]
      \fill[yellow!10,draw=none] (3,0) -- (0,0) -- (0,6) -- (3,6) -- (3,5) -- (1,5) -- (1,1) -- (3,1) -- (3,0);
      \draw(3,0) -- (0,0) -- (0,6) -- (3,6) (3,5) -- (1,5) -- (1,1) -- (3,1);
      \draw[ultra thick,green!50!black] (-0.3,4) -- (1.3,4);
      \draw[thick, purple!50!black,->, decorate, decoration={snake,amplitude=.4mm, segment length=2mm, post length=1mm}](2.8,3) -- (6.5,3);
      \begin{scope}[xshift=2cm]
	\fill[yellow!10,draw=none] (5,0) -- (5,4) -- (9,4)-- (9,3) -- (6,3) -- (6,0) -- (5,0);
\fill[yellow!10,draw=none] (9,1) -- (7,1) -- (7,2.8) -- (8,2.8) -- (8,2) -- (9,2);
\fill[yellow!10,draw=none] (7,4.2) rectangle (8,6);
      \draw(5,0) -- (5,4) -- (9,4) (9,3) -- (6,3) -- (6,0) (9,1) -- (7,1) -- (7,2.8) (7,4.2) -- (7,6) (8,6) -- (8,4.2) (8,2.8) -- (8,2) -- (9,2);
\draw[ultra thick,green!50!black] (4.7,0) -- (6.3,0);
\draw[ultra thick,green!50!black] (6.7,6) -- (8.3,6);
    \end{scope}
     \end{scope}
  \end{tikzpicture}
\caption{Left: the canonical surface associated to 7-braid word $v_A=a_{3,7}a_{2,5}a_{3,6}a_{5,7}a_{4,6}a_{1,4}$ build form the six braid word $w_A=a_{1,4}a_{2,5}a_{4,6}a_{3,5}a_{1,3}a_{2,6}$ (ignoring the dotted and green part).
Right: Cutting open a zero-framed annulus and introducing a negative twist. This is the process that happens on the level of Seifert surfaces from Figure~\ref{fig:QPA} to the left-hand side of this figure.}
\label{fig:v_A}
\end{figure}
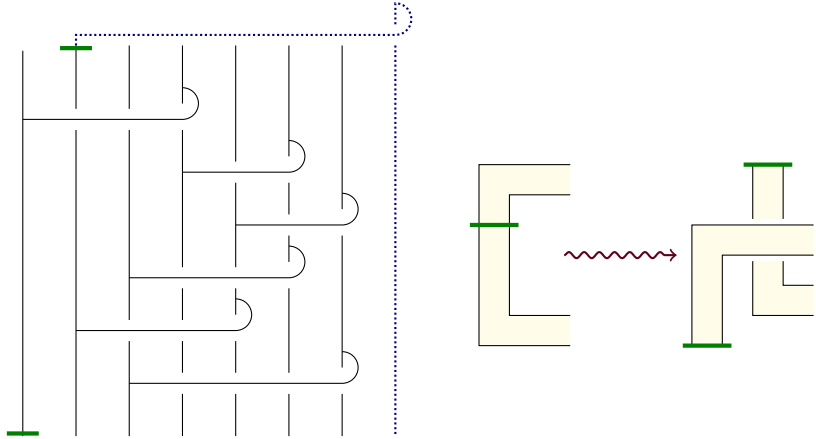
Note that the canonical surface associated to $v_A$ is a disc. The picture to keep in mind is that we are creating a `quasipositive band $F(v_A)$ with a negative twist in it',
by cutting open a  zero-framed quasipositive annulus $F(w_A)$; see the right-hand side of Figure~\ref{fig:v_A}.

The construction of a braid word $v_A$ and the quasipositive band $F(v_A)$ is conducted for $A=A_1,\ldots,A_t$. We denote by $v_1,\ldots,v_t$ the
resulting braid words and by $F_1,\ldots,F_t$ the resulting surfaces. We let $i_1,\ldots,i_t$ denote the number of strands of $v_1,\ldots,v_t$, respectively.

We proceed now by induction on $t$, replacing successively negative generators $g_t,g_{t-1},\ldots,g_1$ in $w$ as follows.
Out of the $n$-braid word $w$ we build a new braid word $w'$ by first replacing all generators $a_{i,j}^{\pm 1}$ except $g_t$ by
\[\left\{\begin{array}{lll}(a_{i,j})^{\pm 1}, & \text{if }& j\leq i_t;\\
(a_{i,j+n_{A_t}-1})^{\pm 1},& \text{if }& i\leq i_t<j;\\
(a_{i+n_{A_t}-1,j+n_{A_t}-1})^{\pm 1},& \text{if }& i_t<i\end{array}\right..\]
Then we replace $g_t$ by the following braid word $\wt{v}_t$: $\wt{v}_t$ is given by shifting all generators of $v_t$ to the right by $i_t-1$ (that is replace $a_{i,j}$ in $v_{n}$ by $a_{i+i_t-1,j+i_t-1}$) and adding the generator $a_{i_t+1,j_t+i_t-1}$.
The procedure is illustrated in Figure~\ref{fig:w}: first the passage from $w$ to $w'$ is described and then
we replace a negative generator in the word $w'$ by a band from Figure~\ref{fig:v_A} (this time including the dotted line).
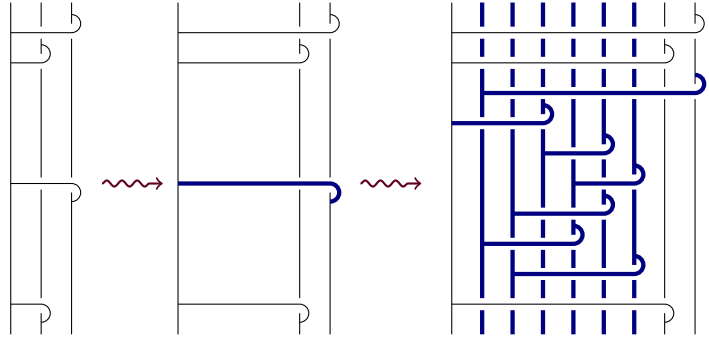
\begin{figure}[h]
\begin{tikzpicture}
  \begin{scope}[scale=0.4,xshift=-2cm]
    \begin{scope}[xshift=-5.5cm]
      \draw(0,0) -- (0,11);
      \draw(1,0) -- (1,0.7) (1,1.2) -- (1,4.8) (1,5.2) -- (1,8.8) (1,9.3) -- (1,9.8) (1,10.2) -- (1,11);
      \draw(2,0) -- (2,4.7) (2,5.2) -- (2,9.8) (2,10.3) -- (2,11);
      \draw(0,10) -- (2,10) arc [start angle=270, delta angle=180, radius=0.3];
      \draw(0,9) -- (1,9) arc [start angle=270, delta angle=180, radius=0.3];
      \draw(0,5) -- (2,5) arc [start angle=90, delta angle=-180, radius=0.3];
      \draw(0,1) -- (1,1) arc [start angle=90, delta angle=-180, radius=0.3];
    \end{scope}
    \draw[thick, purple!50!black,->, decorate, decoration={snake,amplitude=.4mm, segment length=2mm, post length=1mm}](-2.5,5) -- (-0.5,5);
    \draw[thick, purple!50!black,->, decorate, decoration={snake,amplitude=.4mm, segment length=2mm, post length=1mm}](6,5) -- (8,5);
    \begin{scope}[xshift=0cm]
      \draw(0,0) -- (0,11);
      \draw(4,0) -- (4,0.7) (4,1.2) -- (4,4.8) (4,5.2) -- (4,8.8) (4,9.3) -- (4,9.8) (4,10.2) -- (4,11);
      \draw(5,0) -- (5,4.7) (5,5.2) -- (5,9.8) (5,10.3) -- (5,11);
      \draw(0,10) -- (5,10) arc [start angle=270, delta angle=180, radius=0.3];
      \draw(0,9) -- (4,9) arc [start angle=270, delta angle=180, radius=0.3];
      \draw[ultra thick, blue!50!black](0,5) -- (5,5) arc [start angle=90, delta angle=-180, radius=0.3];
      \draw(0,1) -- (4,1) arc [start angle=90, delta angle=-180, radius=0.3];
    \end{scope}
  \end{scope}
  \begin{scope}[scale=0.4,xshift=7cm]
    \draw(0,0) -- (0,11);
    \draw[ultra thick, blue!50!black](1,0) -- (1,0.8) (1,1.2) -- (1,6.8) (1,7.2) -- (1,8.8) (1,9.2) -- (1,9.8) (1,10.2) -- (1,11);
    \draw[ultra thick, blue!50!black](2,0) -- (2,0.8) (2,1.2) -- (2,2.8) (2,3.2) -- (2,6.8) (2,7.2) -- (2,7.8) (2,8.2) -- (2,8.8) (2,9.2) -- (2,9.8) (2,10.2) -- (2,11);
    \draw[ultra thick, blue!50!black](3,0) -- (3,0.8) (3,1.2) -- (3,1.8) (3,2.2) -- (3,2.8) (3,3.2) -- (3,3.8) (3,4.2) -- (3,6.8) (3,7.3) -- (3,7.8) (3,8.2) -- (3,8.8) (3,9.2) -- (3,9.8) (3,10.2) -- (3,11);
    \draw[ultra thick, blue!50!black](4,0) -- (4,0.8) (4,1.2) -- (4,1.8) (4,2.2) -- (4,2.8) (4,3.3) -- (4,3.8) (4,4.2) -- (4,5.8) (4,6.2) -- (4,7.8) (4,8.2) -- (4,8.8) (4,9.2) -- (4,9.8) (4,10.2) -- (4,11);
    \draw[ultra thick, blue!50!black](5,0) -- (5,0.8) (5,1.2) -- (5,1.8) (5,2.2) -- (5,3.8) (5,4.3) -- (5,4.8) (5,5.2) -- (5,5.8) (5,6.3) -- (5,7.8) (5,8.2) -- (5,8.8) (5,9.2) -- (5,9.8) (5,10.2) -- (5,11);
    \draw[ultra thick, blue!50!black](6,0) -- (6,0.8) (6,1.2) -- (6,1.8) (6,2.3) -- (6,4.8) (6,5.3) -- (6,7.8) (6,8.2) -- (6,8.8) (6,9.2) -- (6,9.8) (6,10.2) -- (6,11);
    \draw(7,0) -- (7,0.7) (7,1.2) -- (7,7.8) (7,8.2) -- (7,8.8) (7,9.3) -- (7,9.8) (7,10.2) -- (7,11);
    \draw(8,0) -- (8,7.8) (8,8.3) -- (8,9.8) (8,10.3) -- (8,11);
    \draw(0,10) -- (8,10) arc [start angle=270, delta angle=180, radius=0.3];
    \draw(0,9) -- (7,9) arc [start angle=270, delta angle=180, radius=0.3];
    \draw[ultra thick, blue!50!black](1,8) -- (8,8) arc [start angle=270, delta angle=180, radius=0.3];
    \draw[ultra thick, blue!50!black](0,7) -- (3,7) arc [start angle=270, delta angle=180, radius=0.3];
    \draw[ultra thick, blue!50!black](3,6) -- (5,6) arc [start angle=270, delta angle=180, radius=0.3];
    \draw[ultra thick, blue!50!black](4,5) -- (6,5) arc [start angle=270, delta angle=180, radius=0.3];
    \draw[ultra thick, blue!50!black](2,4) -- (5,4) arc [start angle=270, delta angle=180, radius=0.3];
    \draw[ultra thick, blue!50!black](1,3) -- (4,3) arc [start angle=270, delta angle=180, radius=0.3];
    \draw[ultra thick, blue!50!black](2,2) -- (6,2) arc [start angle=270, delta angle=180, radius=0.3];
    \draw(0,1) -- (7,1) arc [start angle=90, delta angle=-180, radius=0.3];
  \end{scope}
\end{tikzpicture}
\caption{Replacing a negative band by a strongly quasipositive braid word in the proof of Proposition~\ref{prop:main}.}
\label{fig:w}
\end{figure}

By abuse of notation, we call the resulting $(n+i_t-1)$-braid word $w$ and its negative generators $g_1,\ldots g_{t-1}$.
Let $v$ denote the $((n+i_t+\dots+i_1)-t)$-braid word obtained by repeating the process described in the last paragraph $t-1$ more times. Then $v$
is by construction a strongly quasipositive word. Its associated canonical surface is the surface $P$ obtained by replacing $t$ bands with negative twists by quasipositive
bands $F_1,\ldots,F_t$.
\end{proof}

\section{Proof of Theorem~\ref{thm:main2}}\label{sec:proofOfThm}
Take a link $L$ and a Seifert surface $F$ for it; i.e.~a Seifert surface with oriented boundary $L$. By Proposition~\ref{prop:main} there exist an integer $t$ and closed bands $S_1,\ldots,S_t$ in $F$ such that tying any non-trivial strongly quasipositive knots $K_1,\ldots,K_t$ into these bands yields a quasipositive surface. Call this surface $F'$ and let $L'$ be the boundary of $F'$.

We begin with the following observation. Recall that the link $L'\subset S^3$ is a \emph{satellite} with pattern a link $L\subset D^2\times S^1$ and companion a knot $K\subset S^3$, if there is an orientation preserving embedding of  $D^2\times S^1$ into $S^3$ mapping $L$ to $L'$ such that $\{0\}\times S^1$ maps to $K$ and $\{0\}\times S^1$ and $\{1\}\times S^1$ have algebraic linking number 0; compare e.g.~\cite[Section 4.D]{rolfsen_knotsandlinks}).

\begin{proposition}\label{prop:is_satellite}
  The link $L'$ arises from $L$ as an $t$-fold satellite construction.
  Namely, there exists a sequence of links $L_0,\ldots,L_t$ with $L_0=L$, $L_t=L'$ such that for $i=1,\ldots,t$, the link $L_{i}$ is the satellite link with companion knot $K_{i}$ and
  pattern $L_{i-1}$.
\end{proposition}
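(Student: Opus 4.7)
The plan is to define the sequence $L_0, \ldots, L_t$ iteratively by tying one knot at a time, and then verify that each step is literally a satellite operation, invoking the observation made in Section~\ref{sec:proofOfProp} that tying a zero-framed knot into a closed band of a Seifert surface is such an operation.

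First I would arrange the cylinders $Z_1, \ldots, Z_t$ associated with the closed bands $S_1, \ldots, S_t$ from Proposition~\ref{prop:main} to be pairwise disjoint; this is always possible by shrinking. Then I would define $L_0 := L$ and, inductively, let $F_i$ be obtained from $F_{i-1}$ by tying the zero-framed knot $K_i$ into the band $S_i$, setting $L_i := \partial F_i$. Disjointness ensures that $S_{i+1}, \ldots, S_t$ remain well-defined closed bands in $F_i$ and that $F_t$ coincides with the surface $F'$ from Proposition~\ref{prop:main}, so $L_t = L'$.

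For each $i$, I would then exhibit the satellite structure of $L_i$ with pattern $L_{i-1}$ and companion $K_i$ using the solid torus described at the end of Section~\ref{sec:proofOfProp}. Let $M_i$ be the mantle of $Z_i$ and pick a small open tubular neighborhood $\mathcal{N}(M_i)$ disjoint from $L_{i-1}$. Since $\mathcal{N}(M_i) \cong S^1 \times D^2$ and the splitting $S^3 = \mathcal{N}(M_i) \cup V_i$ with $V_i := S^3 \setminus \mathcal{N}(M_i)$ is a genus-one Heegaard splitting of $S^3$, the piece $V_i$ is an unknotted solid torus containing $L_{i-1}$. Fix an orientation-preserving, zero-framed diffeomorphism $\iota_i \colon D^2 \times S^1 \to V_i$ and let $P_i := \iota_i^{-1}(L_{i-1}) \subset D^2 \times S^1$.

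It remains to produce a zero-framed embedding $\psi_i \colon D^2 \times S^1 \to S^3$ with $\psi_i(\{0\} \times S^1) = K_i$ such that $\psi_i(P_i) = L_i$. I would take $\psi_i$ to agree with $\iota_i$ outside $\iota_i^{-1}(Z_i)$ and to insert a standard zero-framed copy of $K_i$ inside $\iota_i^{-1}(Z_i)$; then outside $\iota_i^{-1}(Z_i)$ the equality is immediate, while inside it, the two straight parallel arcs of $P_i$ get mapped to two parallel arcs tracing $K_i$ along its zero-framing, which is precisely the zero-framed $1$-tangle $T_{K_i}$ used in the tying construction. The main technical subtlety to keep in mind is the framing compatibility: the zero-framing of $\psi_i$ must coincide with the zero-framing defining $T_{K_i}$, but both are determined by the blackboard framing that the band $S_i$ inherits from the Seifert surface $F$, so this match is automatic.
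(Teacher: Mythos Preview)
Your proposal is correct and follows essentially the same route as the paper's proof: both define the sequence $F_0,\ldots,F_t$ inductively by tying one knot at a time, identify the solid torus as the complement of a neighborhood of (the core of) the mantle of $Z_i$, view $L_{i-1}$ as the pattern therein, and note that the remaining bands persist for the induction. The paper phrases the solid torus via the meridional circle $\eta_i=\Psi^{-1}((\partial D^2)\times\{\tfrac12\})$ rather than the full mantle, but these yield isotopic solid tori and the argument is the same; your treatment of the embedding $\psi_i$ and the framing check is in fact slightly more explicit than the paper's.
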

\begin{proof}
This is a consequence of the fact that tying a knot $K$ into a closed band in a Seifert surface $F$ amounts to realizing the boundary of the resulting surface as a satellite with pattern $\partial F$ and companion $K$. For completeness, we provide a detailed proof.

Set $F_0:=F$ and $L_0:=L$.
  Take $\eta_1$ to be a simple closed curve going once along the band $S_1$; see Figure~\ref{fig:tieup}.
  \begin{figure}[h]
  \begin{tikzpicture}
      \draw[fill=yellow!5!white] (-2,0) rectangle (2,1);
      \draw(0,0.5) node [scale=0.7]{$L_{i-1}$};
      \draw (0.35,-1.15) ellipse [x radius=0.3, y radius=0.7];
      \fill[draw=none,color=white] (-0.3,-1.5) rectangle (0.3,-0.8);
      \draw (0.35,-0.3) node [scale=0.7] {$\eta_i$};
      \draw (-1.8,0) -- (-1.8,-1.3) -- (-0.5,-1.3) (-0.5,-1) -- (-1.5,-1) -- (-1.5,0);
      \draw (1.8,0) -- (1.8,-1.3) -- (0.8,-1.3) (0.8,-1) -- (1.5,-1) -- (1.5,0);
      \begin{scope}
	\clip (-2,-1.5) rectangle (0,-0.5);
	\fill[draw=none,color=blue!20]  (0.5,-1.3) -- (0.25,-1.3) .. controls (0.05,-1.3) and (0.15,-1.3) .. (0.05,-1.2) -- (-0.05,-1.1) .. controls (-0.15,-1) and (-0.05,-1) .. (-0.25,-1) -- (-0.5,-1) -- (-0.5,-1.3) -- (0.5,-1.3);
	\fill[draw=none,color=white] (-0.5,-1.3) -- (-0.25,-1.3) .. controls (-0.05,-1.3) and (-0.15,-1.3) .. (-0.05,-1.2) -- (0.05,-1.1) .. controls (0.15,-1) and (0.05,-1) .. (0.25,-1) -- (0.5,-1) -- (0.5,-1.3) -- (-0.5,-1.3);
      \end{scope}
      \begin{scope}
	\clip (0,-1.5) rectangle (2,-0.5);
	\fill[draw=none,color=blue!20] (-0.5,-1.3) -- (-0.25,-1.3) .. controls (-0.05,-1.3) and (-0.15,-1.3) .. (-0.05,-1.2) -- (0.05,-1.1) .. controls (0.15,-1) and (0.05,-1) .. (0.25,-1) -- (0.5,-1) -- (0.5,-1.3) -- (-0.5,-1.3);
	\fill[draw=none,color=white]  (0.5,-1.3) -- (0.25,-1.3) .. controls (0.05,-1.3) and (0.15,-1.3) .. (0.05,-1.2) -- (-0.05,-1.1) .. controls (-0.15,-1) and (-0.05,-1) .. (-0.25,-1) -- (-0.5,-1) -- (-0.5,-1.3) -- (0.5,-1.3);
      \end{scope}
      \draw (-0.5,-1.3) -- (-0.25,-1.3) .. controls (-0.05,-1.3) and (-0.15,-1.3) .. (-0.05,-1.2) (0.05,-1.1) .. controls (0.15,-1) and (0.05,-1) .. (0.25,-1) -- (0.5,-1);
      \draw (0.5,-1.3) -- (0.25,-1.3) .. controls (0.05,-1.3) and (0.15,-1.3) .. (0.05,-1.2) -- (-0.05,-1.1) .. controls (-0.15,-1) and (-0.05,-1) .. (-0.25,-1) -- (-0.5,-1);
      \draw (-0.25,-1.15) -- ++(-0.25,0.25) -- node [midway,above,scale=0.7] {$S_i$} ++(-0.6,0);
      \begin{scope}[y=0.6pt, x=0.6pt,yscale=-1, inner sep=0pt, outer sep=0pt]

	\begin{scope}[xshift=3cm,yshift=-1cm]
  \path[draw=black]
    (32.6,113) .. controls (44.5593,113.3788) and (45.2692,113.6082) ..
    (51.4494,114.1876)(60.7909,114.4450) .. controls (79.1129,112.1593) and
    (117.8830,120.7822) .. (129.8930,107.5059) .. controls (138.1716,98.3544) and
    (124.8307,90.4398) .. (117.4597,90.5888)(105.5662,92.0588) .. controls
    (87.1034,93.9450) and (55.8026,98.5753) .. (55.5921,114.3213) .. controls
    (55.3575,131.8609) and (80.2237,128.2143) ..
    (87.3366,116.8277)(96.4238,108.5424) .. controls (126.2156,77.6369) and
    (124.2907,77.5949) .. (172,77);

    \path[draw=black]
    (32.6,109) .. controls (44.5590,109.4193) and (44.4670,110.0496) ..
    (50.6473,110.6290)(59.9887,110.8864) .. controls (78.3108,108.6007) and
    (113.7109,118.4288) .. (126.4182,105.8182) .. controls (134.7069,97.5926) and
    (125.8692,92.7186) .. (113.8512,93.1774)(107.3031,88.0993) .. controls
    (88.8404,89.9855) and (55.9966,96.1303) .. (53.3203,111.6486) .. controls
    (50.0121,130.8304) and (77.5510,138.4108) ..
    (90.1429,116.5605)(91.7462,108.4583) .. controls (121.5380,77.5528) and
    (124.9586,74.3036) .. (172,74.5);
    \draw (32.6,113) -- (12,113) -- (12,90);
    \draw (32.6,109) -- (16,109) -- (16,90);
    \draw (12,70) -- (12,63) -- (16,57) -- (16,50);
    \draw (15,70) -- (15,63) -- (14.5,62) (12.5,58) --  (12,57) -- (12,50);
    \draw (172,77) -- (192,77) -- (192,50);
    \draw (172,74.5) -- (188,74.5) -- (188,50);
    \draw[fill=yellow!5] (0,50) rectangle (200,0);
    \fill[white,draw=none] (-15,70) rectangle (65,90);
    \draw[blue,dashed] (-15,70) rectangle (65,90);
    \draw (25,80) node [scale=0.7] {$-3$ full twists};
    \draw (100,27) node [scale=0.7] {$L_{i-1}$};
\end{scope}
\end{scope}
\end{tikzpicture}
\caption{Illustration of the proof of Proposition~\ref{prop:is_satellite}.  Left: the band $S_i$ and the circle $\eta_i$. The link $L_{i-1}$ can be regarded as a link in the complement of a neighborhood
of $\eta_i$. Right: the band $S_i$ is replaced by a band that is tied along the knot $K_i$. For clarity we drew $K_i$ to be a trefoil;
in our application the knot $K_i$ will have Alexander polynomial one in addition to being non-trivial and strongly quasipositive. Note the three negative twists to keep the framing of the inserted band
equal to zero.}\label{fig:tieup}
\end{figure}
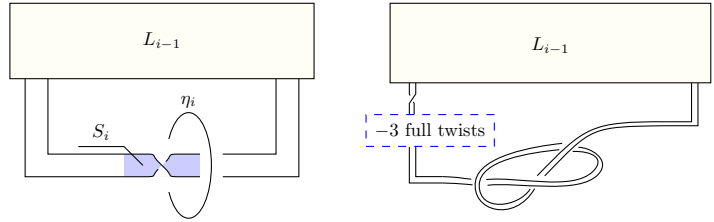
More precisely, given a cylinder $Z_1\overset{\Psi}{\cong} D^2\times[0,1]$ defining the closed band $S_1$ (as in the definition of a closed band in Section~\ref{sec:proofOfProp}), take $\eta_1$ to be $\Psi^{-1}((\partial D^2)\times\{\frac{1}{2}\})$.
  The link $L_{0}=L$ can be considered as a link in a solid torus $S^3\setminus\nu(\eta_{1})$,
  where $\nu(\cdot)$ denotes `an open tubular neighborhood of $\cdot$'. Identifying $S^3\setminus\nu(\eta_{1})$ with $D^2\times S^1$ (orientation preserving, mapping the null-homologous longitude\footnote{Here we take a longitude to be (the homology class of) a curve in the boundary of $S^3\setminus\nu(\eta_{1})$ that is null-homologous in the closure of $\nu(\eta_{1})$ and links $\eta_{1}$ once positively.}
  to $\{1\}\times S^1$) further allows to view $L=L_0$ as a link in $D^2\times S^1$.

  Denote the resulting surface of tying the zero-framed knot $K_1$ into the closed band $S_{1}\subset F_0=F$ by $F_1$ and its boundary by $L_1$. With this setup it is evident that $L_1$ a satellite with pattern $L_{0}\subset D^2\times S^1$ and companion $K_1$; that is, $L_1$ arises as the image of $L_0\subset D^2\times S^1$ under an orientation preserving embedding of $D^2\times S^1$ into $S^3$ such that $\{0\}\times S^1$ maps to $K_1$ and $\{0\}\times S^1$ and $\{1\}\times S^1$ have algebraic linking number 0.
  Note that the $S_j$ for $j\geq 2$ also constitute closed bands in $F_1$.

  We proceed inductively: define $\eta_i$, $F_i$, and $L_i$ correspondingly (see Figure~\ref{fig:tieup}), observe (as in the case of $i=1$) that $L_i$ arises as the satellite link with pattern $L_{i-1}\subset D^2\times S^1$ and companion $K_i$, and note that $S_j$ for $j\geq i+1$ also constitute closed bands in $F_i$.

\end{proof}

The last ingredient in proving Theorem~\ref{thm:main2} is a result that controls the behavior of concordance after a satellite construction. Apart from the statements about the fundamental group, this is well-known to experts; for example, this is the fact that insures that satellite constructions are defined on the concordance group rather than just on isotopy classes of knots.

\begin{proposition}\label{prop:concordance_satellite}
  Suppose $K\subset S^3$ is a knot, $U$ is the unknot and $A\subset S^3\times[0,1]$ is a topological concordance between them, i.e.~a locally flat annulus such that $A\cap S^3\times\{0\}=K$, $A\cap S^3\times\{1\}=U$. Let $L$ be a pattern
  with $m$ components in the solid
  torus $D^2\times S^1$ and let $L(K)$ and $L(U)$ be corresponding satellites with companion $K$ and $U$, respectively.

  Then there exists a disjoint union of $m$ locally flat annuli $
  L(A)\subset S^3\times[0,1]$ such that
  \[L(A)\cap S^3\times\{0\}=L(K)\times\{0\}\et L(A)\cap S^3\times\{1\}=L(U)\times\{1\},\] where the orientations induced by $L(A)$ on $L(K)\times\{0\}$ and $L(U)\times\{1\}$ are the orientation of $L(K)$ and the reverse orientation of $L(U)$, respectively.

  If additionally the inclusion induced maps
  \[\pi_1(S^3\setminus U)\to \pi_1(S^3\times[0,1]\setminus A) \et \pi_1(S^3\setminus K)\to \pi_1(S^3\times[0,1]\setminus A)\] are an isomorphism and a surjection, respectively, then the inclusion induced maps \[\pi_1(S^3\setminus L(U))\to \pi_1(S^3\times[0,1]\setminus L(A))\et\pi_1(S^3\setminus L(K))\to \pi_1(S^3\times[0,1]\setminus L(A))\] are an isomorphism and a surjection, respectively.
\end{proposition}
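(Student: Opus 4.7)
The plan is to construct $L(A)$ as the image of $L\times[0,1]$ under a tubular neighborhood embedding of the concordance $A$, and then transfer the $\pi_1$ hypotheses via a van Kampen argument. First, since $A$ is a locally flat annulus in the 4-manifold $S^3\times[0,1]$, it admits a topological tubular neighborhood $\nu(A)$, and the normal $D^2$-bundle is trivial because the base annulus is homotopy equivalent to $S^1$; thus $\nu(A)\cong A\times D^2$. I would choose a trivialization so that on each boundary component of $S^3\times[0,1]$ it restricts to the solid torus $\nu(K)\subset S^3\times\{0\}$, resp.\ $\nu(U)\subset S^3\times\{1\}$, used to define $L(K)$ and $L(U)$ as satellites. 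Such a matching is possible: the satellite convention fixes the zero-framing longitude on each end, and a single trivialization of a trivial $D^2$-bundle over $A$ reconciles both ends once one checks that the two boundary framings differ by a number of Dehn twists that is zero---which holds because $A$ is null-homologous in $S^3\times[0,1]$. Setting $L(A)$ to be the image of $L\times[0,1]\subset(D^2\times S^1)\times[0,1]$ under this tubular neighborhood embedding then yields $m$ disjoint locally flat annuli in $S^3\times[0,1]$ with boundary $L(K)\sqcup L(U)$ and with orientations as prescribed, since the boundary of an oriented $L\times[0,1]$ is $L\times\{0\}$ union the reverse-oriented $L\times\{1\}$.

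For the $\pi_1$ statements I apply van Kampen on both sides. Writing $(S^3\times[0,1])\setminus L(A)$ as the union of $(S^3\times[0,1])\setminus A$ (which deformation retracts to the complement of $\mathrm{int}\,\nu(A)$) and $\nu(A)\setminus L(A)\cong((D^2\times S^1)\setminus L)\times[0,1]$, meeting in $\nu(A)\setminus A\simeq T^2\times[0,1]$ with $\pi_1\cong\Z^2$, gives
\[\pi_1\bigl((S^3\times[0,1])\setminus L(A)\bigr)\cong\pi_1\bigl((S^3\times[0,1])\setminus A\bigr)*_{\Z^2}\pi_1\bigl((D^2\times S^1)\setminus L\bigr).\]
The completely analogous decompositions of $S^3\setminus L(K)$ and $S^3\setminus L(U)$ yield
\[\pi_1(S^3\setminus L(K))\cong\pi_1(S^3\setminus K)*_{\Z^2}\pi_1((D^2\times S^1)\setminus L),\]
and similarly for $L(U)$. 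By construction of $\nu(A)$, the three amalgamating $\Z^2$ subgroups correspond under the inclusion-induced maps (the peripheral torus of $\nu(A)$ at an end is identified with the peripheral torus of the companion on that boundary). Hence the hypothesized isomorphism $\pi_1(S^3\setminus U)\to\pi_1((S^3\times[0,1])\setminus A)$ and surjection $\pi_1(S^3\setminus K)\to\pi_1((S^3\times[0,1])\setminus A)$ transport directly to the corresponding statements on the amalgamated products, by functoriality of pushouts of groups.

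I expect the principal obstacle to be the bundle-theoretic setup of Step 1: justifying that the topological tubular neighborhood of the locally flat concordance $A$ can be chosen to restrict, at its two boundary components, to the prescribed satellite solid tori around $K$ and $U$. Once this framing compatibility is established, the definition of $L(A)$ is immediate and the $\pi_1$ argument is a direct application of van Kampen together with the functoriality of amalgamated products.
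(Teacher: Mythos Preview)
Your proposal is correct and follows essentially the same approach as the paper: both construct $L(A)$ as the image of $L\times[0,1]$ under a trivialized tubular neighborhood of the locally flat annulus $A$ (with the framing-compatibility check you flag being exactly the point the paper addresses), and both deduce the $\pi_1$ statements from Seifert--van Kampen by comparing the three pushout decompositions for $K$, $U$, and $A$ and invoking the universal property of pushouts. The paper organizes the van Kampen step as a large commutative diagram of three pushout diamonds, but this is merely a presentational difference from your amalgamated-product formulation.
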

\begin{proof}
  The first part of the statement follows by `applying the satellite construction to $A$'. The second part is a Seifert--van Kampen calculation. For the reader's convenience, we provide a detailed proof.

  The $m$ disjoint annuli $L(A)$ are constructed via a satellite operation as follows. Since $A$ is locally flat, there exists
  a tubular neighborhood $V(A)\cong T\times[0,1]$ of $A$ in $S^3\times [0,1]$
  , where $T$ is the open solid torus $(D^2)^\circ\times S^1$; for example, $V(A)$ can be taken to be an open disc subbundle of the normal bundle of $A$ in the sense of~\cite[Section~9.3]{FreedmanQuinn_90_TopOf4Manifolds}. Let $h\colon (T\times[0,1],(\{0\}\times S^1)\times[0,1]) \to (V(A), A)$ be homomorphism parametrizing $V(A)$ such that
  $\lambda=\{\frac{1}{2}\}\times S^1\subset T$ gets mapped to a zero-framing of $K$ and $U$, respectively (i.e.~a curve that has algebraic linking number 0 with $K$ and $L$, respectively). To arrange this note that any such homeomorphism $h$ necessarily maps $\lambda\times\{0\}$ and $\lambda\times\{1\}$ to the same framing of $K$ and $U$, respectively. Thus, choosing that $h$ maps $\lambda\times\{1\}$ to the zero-framing of $U$ (e.g.~by precomposing with a self-homeomorphism of $T$) automatically arranges that $\lambda\times\{0\}$ maps to the zero-framing of $K$. To be conform with our smooth setup for links in $S^3$, we also arrange that $h$ restricts to a smooth embedding on $T\times\{0,1\}$.

 Define $L(A)=h(L\times[0,1])$. It is clearly a topological concordance between $L(K)$ and $L(U)$ since $h(V(A))$ establish (local) flatness. Note that
 here the correct arrangement of the framing is used. 

  To prove the isomorphism and surjection on fundamental groups,
  we let $V(K)$ and $V(U)$ denote tubular neighborhoods of $K$ and $U$ respectively and consider the following diagram.
  \[
    \newxycolor{darkgreen}{0.0 0.5 0.0 rgb}
    \newxycolor{darkblue}{0.0 0.0 0.5 rgb}
    \xymatrix@R-=0.2cm{%
      &\pi_1(S^3\setminus V(K))\ar[rd]\ar@/_5pc/@{.>>}@[darkgreen][ddd]^(0.70){{\color{green!50!black}\!\pi_K}}&\\
      \pi_1(\partial V(K))\ar[ur]\ar[rd]\ar@{.>}[ddd]^{\simeq}&&\pi_1(S^3\setminus L(K))\ar[ddd]^{\pi_{L(K)}}\\
							      &\pi_1(V(K)\setminus L)\ar[ru]\ar@/^5pc/@{-->}@[darkblue][ddd]^(0.65){{\color{blue!50!black}\simeq\!}}&\\
      &\pi_1(S^3\times[0,1]\setminus V(A))\ar[rd]&\\
      \pi_1(\partial V(A))\ar[ur]\ar[rd]&&\pi_1(S^3\setminus L(A))\\
					&\pi_1(V(A)\setminus L(A))\ar[ru]&\\
					&\pi_1(S^3\setminus V(U))\ar[rd]\ar@/^5pc/@{.>}@[darkgreen][uuu]^(0.65){{\color{green!50!black}\simeq}}_(0.75){{\color{green!50!black}\pi_U}}&\\
      \pi_1(\partial V(U))\ar[ur]\ar[rd]\ar@{.>}[uuu]_{\simeq}&&\pi_1(S^3\setminus L(U))\ar[uuu]_{\pi_{L(U)}}\\
							      &\pi_1(V(U)\setminus L)\ar[ru]\ar@/_5pc/@{-->}@[darkblue][uuu]_(0.65){{\color{blue!50!black}\simeq}}&\\
    }
  \]
  All the maps are induced by embeddings. Therefore the diagram is commutative.
  The three diamonds are pushout diagrams by the Seifert--van Kampen theorem. The vertical maps to the left are isomorphism by the definition, the curved dashed maps are isomorphisms
  by the construction of $L(A)$. The curved dotted maps $\pi_U$ and $\pi_K$ are an isomorphism and a surjection, respectively, by the assumptions of the proposition. By the universal
  properties of the pushout, we have that the right vertical maps $\pi_{L(U)}$ and $\pi_{L(K)}$ are, respectively, an isomorphism and a surjection.
\end{proof}
We conclude the proof of Theorem~\ref{thm:main2}.

\begin{proof}[Proof of Theorem~\ref{thm:main2}]
Choose a Seifert surface for the link $L$ and a braid word $w$ that has $F$ as associated canonical surface; in particular, the closure of the braid specified by $w$ is $L=\partial F$. Choose a non-trivial
strongly quasipositive knot $K$ with Alexander polynomial one; for a concrete example one can take a Whitehead double of the trefoil; see Section~\ref{sec:overview}.
By Proposition~\ref{prop:main} there are closed bands in $F$ such that, by tying the knot $K$ into these bands, we obtain from the surface $F$ a quasipositive surface $F'$ with boundary a strongly quasipositive link $L'$.
By Proposition~\ref{prop:is_satellite} the link $L'$ arises from $L$ by successive satellite operations with companion knot $K$. Since $K$ has Alexander polynomial one, there exists
a locally flat annulus $A\subset S^3\times[0,1]$ such that $\partial A=K\times\{0\}\cup U\times\{1\}$, $\pi_1(S^3\times[0,1]\setminus A)=\Z$~\cite[Theorem~1.13]{Freedman_82_TheTopOfFour-dimensionalManifolds} and, thus the maps $\pi_1(S^3\setminus U)\to \pi_1(S^3\times[0,1]\setminus A)$
and $\pi_1(S^3\setminus K)\to \pi_1(S^3\times[0,1]\setminus A)$ are an isomorphism and a surjection, respectively. Theorem~\ref{thm:main2} follows from successive
applications of Proposition~\ref{prop:concordance_satellite}.
\end{proof}
\def\cprime{$'$}

\end{document}